\documentclass[psamsfonts]{amsart}
\usepackage{amssymb,amsfonts}
\usepackage[all,arc]{xy}
\usepackage{enumerate}
\usepackage{mathrsfs}
\usepackage{pstricks}
\usepackage{pst-node}

\usepackage{times}
\usepackage[T1]{fontenc}
\usepackage{mathrsfs}
\usepackage{epsfig}
\usepackage{color}


\newtheorem{thm}{Theorem}[section]

\newtheorem{prop}[thm]{Proposition}
\newtheorem{lem}[thm]{Lemma}

\theoremstyle{definition}
\newtheorem{defn}[thm]{Definition}

\newtheorem{exmp}[thm]{Example}

\theoremstyle{remark}

\newcommand{\NN}{{\mathbb N}}
\newcommand{\ZZ}{{\mathbb Z}}
\newcommand{\QQ}{{\mathbb Q}}

\newcommand{\sA}{{\mathcal A}}
\newcommand{\fA}{{\mathbb A}}
\newcommand{\sB}{{\mathcal B}}
\newcommand{\sC}{{\mathcal C}}
\newcommand{\sF}{{\mathcal F}}
\newcommand{\sG}{{\mathcal G}}
\newcommand{\sH}{{\mathcal H}}
\newcommand{\sL}{{\mathcal L}}
\newcommand{\sE}{{\mathcal E}}

\newcommand{\sP}{{\mathcal P}}
\newcommand{\sS}{{\mathcal S}}
\newcommand{\sT}{{\mathcal T}}
\newcommand{\sZ}{{\mathcal Z}}

\newcommand{\bs}{{\bf s}}
\newcommand{\bw}{{\bf w}}
\newcommand{\Ker}{{\rm Ker}}

\makeatletter
\makeatother
\numberwithin{equation}{section}


\newcommand{\stdnodesep}{3}
\newcommand{\doffset}{3pt}

\newcommand{\node}[3]{\rput{0}(#2){\ovalnode{#3#1}{\Large #1}}}

\newcommand{\aline}[3]{%
	\ncline[nodesepA=\stdnodesep,nodesepB=\stdnodesep]%
	{->}{#1}{#2}%
	\Aput{#3}%
}

\newcommand{\bline}[3]{%
	\ncline[nodesepA=\stdnodesep,nodesepB=\stdnodesep]%
	{->}{#1}{#2}%
	\Bput{#3}%
}

\newcommand{\dline}[4]{%
	\ncarc[nodesepA=\stdnodesep,nodesepB=\stdnodesep,offset=\doffset]%
	{->}{#1}{#2}%
	\Aput{#3}%
	\ncarc[nodesepA=\stdnodesep,nodesepB=\stdnodesep,offset=\doffset]%
	{->}{#2}{#1}%
	\Aput{#4}%
}

\newcommand{\bcircle}[3]{%
	\nccircle[angleA=#2,nodesepA=\stdnodesep]{->}{#1}{20pt}%
	\Bput{#3}%
}

\bibliographystyle{plain}

%
%

\title{ Path sets  in one-sided  symbolic dynamics}

\author{William  Abram}
\address{Department of Mathematics, Hillsdale College,
Hillsdale, MI   49242,   USA}
\email{wabram@hillsdale.edu}

\author{Jeffrey C. Lagarias}
\address{Department of Mathematics, University of Michigan,
Ann Arbor, MI 48109-1043,USA}
\email{lagarias@umich.edu}

\date{December 10, 2013, v5.4}
\thanks{
Work of the  second author was supported by NSF grant DMS-1101373.
MSC 2010 classification: 37B10 (Primary),  28A80, 54H20 (Secondary)}
\begin{document}

\begin{abstract}
Path sets are  spaces of  one-sided infinite symbol sequences associated to pointed graphs
 $(\mathcal{G},v_0)$, which are edge-labeled directed graphs with a distinguished vertex $v_0$. 
  Such sets arise naturally as address labels in geometric fractal constructions and in other contexts.
 The resulting set of symbol sequences
 need not  be closed under the one-sided shift.
 This paper establishes basic properties  of the  structure and symbolic dynamics
of path sets, and shows that they 
are a strict generalization of  one-sided sofic shifts.
\end{abstract}

\maketitle


%
%
%
\section{Introduction}

This paper investigates   the concept of  a {\em path set},
which is a notion in one-sided symbolic dynamics. Path sets form
 an enlargement of
the  class of  one-sided  sofic shifts 
which  includes certain
additional  closed sets not invariant under the one-sided  shift.
Let $ \mathcal{A}^\mathbb{N}$ denote the full one-sided shift space on the finite alphabet $\mathcal{A}$,
 topologized with the product topology. It  is a compact, completely disconnected topological  space. 
 Path sets are  a distinguished class of  closed subsets of $ \mathcal{A}^\mathbb{N}$  constructed as follows.
We are given a finite directed graph $G$ with edges bearing labels from $\mathcal{A}$ with a marked
vertex $v$.
A  {\em path set } prescribed by the data $(G, v)$  is the collection of  one-sided infinite sequences of 
edge labels assigned to all infinite paths in a finite directed graph $G$
emanating  from the vertex $v$ of $G$. 
It is easy to see that path sets  are closed subsets of $\sA^{\NN}$.
We denote  the collection of all path sets on the  alphabet $\sA$
by  $\sC(\sA)$.

 One reason for studying path sets   is that they  naturally arise in 
   connection with geometric constructions of fractals and  limit sets
   of discrete groups. 
     Associated to these
   constructions are 
    {\em address 
   maps}, given by paths in finite directed graphs,
   which specify labels of points in the limit sets of such recursive
   constructions. 
    Such address maps 
       arise in 
geometric graph-directed constructions (Mauldin and Williams \cite{MW88},  Edgar \cite[Sect. 4.3]{Edg08}),
  in iterated function systems 
(Barnsley \cite[Sec. 4.1]{Ba93}),
 in describing limit sets of various discrete group actions (Mauldin and Urbanski \cite{MU03})
and in describing boundaries of fractal tiles (Akiyama and Lorident \cite{AL11}). 
 Under some conditions
   addresses are unique, but in other circumstances
   multiple addresses label the same  geometric point. There are known conditions,
   such as the ``open set condition" under which almost all point have a unique address
   (Bandt et al \cite{Ban06}).
   The symbolic dynamics objects we study here are  the complete
   sets of distinct addresses given by an address map, whether or not  addresses are unique.

   Path sets are not a new concept; they
have previously  appeared in 
the symbolic dynamics literature under the name  ``follower set,"
typically as an auxiliary construction.
The usual  framework of coding in symbolic dynamics, as given in Lind and Marcus \cite{LM95}, 
restricts to  shift-invariant sets (ones with $\sigma (X) \subset X$)  and emphasizes two-sided dynamics;
 shift-invariant sets in the one-sided case are considered in Kitchens \cite{Kit98} for shifts of finite type.
Path sets are objects in  one-sided dynamics that 
are not always  invariant under the one-sided shift map;  
 the initial condition imposed by the marked vertex typically breaks shift-invariance.
Their  distinctive properties 
 related to  lack of shift invariance seem not to have been studied in any detail.
 This paper proposes the terminology  {\em  path sets},  which is consistent with language
  used in  fractal constructions (\cite[Sec. 4.3]{Edg08}),   because   ``follower set"  is used 
 in the symbolic dynamics literature with several different
 meanings,  see the discussion in  Section  \ref{sec12}.  An alternate descriptive term for path set could be   
 {\em pointed follower set}.

 The object of this  paper  is to establish
 basic properties of path sets
 in a relatively self-contained manner. It particularly addresses those 
 properties connected to 
  symbolic dynamics and the action on them of the one-sided shift map
  $\sigma:  \mathcal{A}^\mathbb{N} \to  \mathcal{A}^\mathbb{N}$ which sends
 $\sigma( a_0 a_1a_ 2, \cdots) = (a_1a_2 a_3, \cdots)$, with each $a_i \in \sA$.
We  show  that path sets  form a 
strict generalization of   one-sided sofic shifts on the alphabet $\sA$,
and  characterize sofic shifts as exactly the (one-sided) shift-invariant members of $\sC(\sA)$.
Members of  $\sC(\sA)$  retain  the  good set-theoretic and 
topological entropy properties of one-sided sofic shifts.
The enlargement from the class of sofic shifts to the class of path sets gives a class closed
under a larger set of operations than for sofic shifts; in particular
when the alphabet has $g$ symbols
$\sA= \{ 0,1, 2, ..., g-1\}$
the class $\sC(\sA)$ is closed under
the  $g$-adic arithmetic operations 
of adding and multiplying by ($g$-integral) rational numbers
in the sense of Mahler \cite{Mah61}, 
 see our paper \cite{AL11p}.

One-sided sofic shifts have two different types of graph presentations,
one as a sofic shift graph (represented by  paths starting from any vertex)
and a second as a path set graph (represented by paths starting from a fixed vertex).
There is typically a cost associated with encoding an initial condition in the presentation
of a sofic shift.  We show that  the minimal path set presentation  of
 a sofic system  (measured by number of vertices in the graph)  requires at
 most an  exponential increase 
 in its size relative to its minimal presentation as a sofic system, and present
 examples showing that such an exponential increase must sometimes occur. 
 We also  relate  the notion of path set  to
 several similar  concepts,  and discuss applications
 of this concept in  fractal constructions.

\subsection{Main Results}\label{sec11}
A \emph{pointed graph} $(\mathcal{G},v)$
over a finite alphabet $\mathcal{A}$   comprises a
finite edge-labeled directed graph
  $\mathcal{G}= (G, \sE)$  and  a distinguished vertex $v$ of 
the underlying directed graph $G$. The directed graph  $G=(V, E) $ is specified by its vertex
and (directed)  edge sets $E$ with edges $e=(v_1, v_2) \in V \times V$, and the data 
$\sE\subset E \times \sA$  
specifies the set of labeled edges $(e, \ell)$,
with  labels  drawn from the  alphabet $\sA$. 
We allow loops and  multiple edges, but require that all triples $(e, \ell) = (v_1, v_2, \ell)$ be distinct.
The  results of this paper regard
the alphabet $\sA$ as fixed, unless specifically noted otherwise.

\begin{defn} \label{de111}
The  \emph{path set} (or {\em pointed follower set}) 
   $\sP = X_\mathcal{G}(v) $ specified by a pointed graph $(\sG, v)$ 
  is the subset of $\sA^{\NN}$ made up  of  the symbol sequences
of  successive edge labels of all possible one-sided infinite walks in $\mathcal{G}$ 
issuing from the distinguished vertex $v$. 
We let $\sC(\sA)$ denote the collection of all path sets using labels from  the alphabet $\sA$.
Many different $(\mathcal{G}, v)$ may give the same path set $\sP \subset \sA^{\NN}$, and we call any such
$(\mathcal{G},v)$  a \emph{presentation} of $\sP$. 
\end{defn}

This definition detects  paths without counting multiplicity of occurrence.
Therefore without loss of generality  we may  suppose
 that the labelled graph $\sG$ has
the {\em non-redundancy property} that each
labeled edge  datum $( (v_1, v_2), \ell)$ 
occurs at most once in the graph. 

We first summarize behavior of path sets under set-theoretic operations.

\begin{thm} \label{topthm} {\em  (Set operations on Path Sets)}
 Path sets  on a fixed alphabet $\sA$ have the following properties.

(1) Each path set 
$\mathcal{P} \in \sC(\sA)$
 is a closed subset of $\mathcal{A}^\mathbb{N}$, carrying the product topology.

(2) If $\mathcal{P}_1$ and $\mathcal{P}_2$ are path sets, then so is $\mathcal{P}_1 \cap \mathcal{P}_2$.

(3) If $\mathcal{P}_1$ and $\mathcal{P}_2$ are path sets, then so is $\mathcal{P}_1 \cup \mathcal{P}_2$.
\end{thm}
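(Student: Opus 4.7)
\medskip

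\noindent\textbf{Proof proposal.}  The plan is to handle the three parts independently, using basic finite graph constructions.  Fix a presentation $(\sG_i, v_i)$ of $\sP_i$ for $i=1,2$, and assume without loss of generality that each presentation has the non-redundancy property.

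For part (1), I would argue that the complement of $\sP$ in $\sA^\NN$ is open.  Suppose $x = x_0 x_1 x_2 \cdots \notin \sP$.  Consider the tree $T$ of all finite walks in $\sG$ issuing from $v$ whose edge labels form an initial segment of $x$.  Because $\sG$ is finite, the vertex set $V$ is finite, and so $T$ is a finitely branching tree.  If $T$ were infinite, König's lemma would produce an infinite branch, and that branch would be an infinite walk in $\sG$ from $v$ with label sequence $x$, contradicting $x \notin \sP$.  Therefore $T$ is finite, so there is some $n$ such that no finite walk of length $n$ from $v$ has label $x_0 x_1 \cdots x_{n-1}$.  The cylinder set determined by this prefix is then an open neighborhood of $x$ disjoint from $\sP$, so $\sP$ is closed.

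For part (2), I would use the standard product (fiber product) construction.  Form the graph $\sG$ with vertex set $V_1 \times V_2$ and with a labeled edge $\bigl((u_1,u_2),(w_1,w_2),\ell\bigr)$ whenever $(u_1,w_1,\ell)$ is an edge of $\sG_1$ and $(u_2,w_2,\ell)$ is an edge of $\sG_2$, distinguishing the vertex $(v_1,v_2)$.  An infinite walk in $\sG$ from $(v_1,v_2)$ projects onto an infinite walk in each $\sG_i$ from $v_i$ carrying the same label sequence, and conversely any label sequence common to walks from $v_1$ and $v_2$ lifts uniquely to a walk in $\sG$.  Thus $X_\sG((v_1,v_2)) = \sP_1 \cap \sP_2$.

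For part (3), I would introduce a new initial vertex.  Take the disjoint union $\sG_1 \sqcup \sG_2$ and adjoin a new vertex $v_0$; for each labeled edge $(v_1,w,\ell)$ of $\sG_1$ add a labeled edge $(v_0,w,\ell)$ to the new graph, and do the same for each labeled edge $(v_2,w,\ell)$ of $\sG_2$.  Any infinite walk from $v_0$ traverses a first edge into exactly one of the two disjoint components and then stays in that component, so its label sequence lies in $\sP_1 \cup \sP_2$; conversely each element of $\sP_i$ is realized as the label of such a walk by prepending the appropriate edge from $v_0$.  The main (minor) obstacle throughout is purely bookkeeping: keeping the vertex labels in the disjoint union distinct and verifying non-redundancy of the resulting labeled edges, which is immediate since edges with a common label leaving $v_0$ necessarily land in distinct components of the disjoint union.
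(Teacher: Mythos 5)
Your proposal is correct and follows essentially the same route as the paper: part (2) via the (pointed) label product on $V_1\times V_2$ with distinguished vertex $(v_1,v_2)$, and part (3) via the disjoint union with a new initial vertex whose exit edges copy those of $v_1$ and $v_2$. For part (1) you supply a compactness/K\"onig's-lemma argument where the paper simply calls closedness immediate, which is a fine (and slightly more careful) filling-in of the same fact rather than a different approach.
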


 The complement $X \smallsetminus \sP$ of a path set $\sP$
 inside a given  $X = \sA^{\NN}$ with fixed alphabet $\sA$ need not be a path set.
Thus  the collection $\sC(\sA)$
of path sets with fixed alphabet $\sA$ 
does not form  a Boolean algebra of sets.

Second,  we study the action of the one-sided shift operator on path sets, and
in doing so relate path sets to sofic
shifts in symbolic dynamics. Recall that the one-sided shift map 
$\sigma \mathcal{A}^\mathbb{N} \to  \mathcal{A}^\mathbb{N} $ acts on the semi-infinite symbol sequences 
by
 $$\sigma( a_0a_1a_2a_3 \cdots) := (a_1a_2a_3 \cdots).$$
 The notion of a sofic system was originally  introduced by
Weiss \cite{Wei73} in 1973, in the context of two-sided sequences,  and much of the
literature treats the two-sided case, particularly  Lind and Marcus \cite[Chap. 3]{LM95}. 
Here we consider a   one-sided version
of this concept,  defined as follows.

\begin{defn}\label{de13}
A \emph{one-sided sofic shift} is a subset $Y \subset \mathcal{A}^\mathbb{N}$  
 specifying all possible sequences of  labels of one-sided infinite walks along a finite edge-labeled directed graph $\mathcal{G}$,
 starting from any vertex of the graph. 
 \end{defn}
 
 This definition immediately implies that a one-sided sofic shift $Y$ is invariant
 under the one-sided shift map, i.e.  $\sigma(Y) \subseteq Y$.
 
 Ashley, Kitchens and Stafford \cite{AKS92} previously 
introduced a  notion of one-sided sofic shift, 
defining it  as a symbolic dynamical system
having the property of  finiteness of
the collection of all  possible finite
follower sets (as defined in  Section \ref{sec12} below).
Appendix B of this paper shows that  their definition is
equivalent to  the definition above.

   We show that the  notion of a path set is a 
  strict generalization of  a one-sided sofic shift.

\begin{thm} \label{softhm} {\em (One-sided Shift action on Path Sets)}\\
\indent (1)  For any path set  $\mathcal{P}$, the shifted  set  $\sigma(\mathcal{P})$ is also a path set.

(2) The (one-sided) shift closure $\overline{\mathcal{P}} = \cup_{j \in \mathbb{N}} \sigma^j(\mathcal{P})$ of
a path set is  a path set.
 
  (3) Every shift-invariant path set is a one-sided sofic shift,  and conversely.
  
\end{thm}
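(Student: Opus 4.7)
The plan is to deduce all three parts from a single structural identity: for a path set $\mathcal{P} = X_\mathcal{G}(v_0)$ with presentation $(\mathcal{G}, v_0)$, every shift $\sigma^j(\mathcal{P})$ can be written as a finite union of path sets of the form $X_\mathcal{G}(v)$, where $v$ ranges over vertices of the \emph{same} underlying labelled graph $\mathcal{G}$ and only the marked vertex changes. Combined with Theorem \ref{topthm}(3), which asserts that finite unions of path sets are again path sets, this delivers all three claims.

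For part (1), a sequence $(a_1 a_2 \cdots)$ lies in $\sigma(\mathcal{P})$ if and only if there is some edge $(v_0, v_1, a_0)$ in $\mathcal{G}$ together with an infinite walk in $\mathcal{G}$ from $v_1$ whose labels spell $(a_1 a_2 \cdots)$. Letting $S(v_0)$ denote the (finite) set of immediate out-neighbors of $v_0$, this gives $\sigma(\mathcal{P}) = \bigcup_{v_1 \in S(v_0)} X_\mathcal{G}(v_1)$, a finite union of path sets, hence a path set by Theorem \ref{topthm}(3). Iterating the same argument, $\sigma^j(\mathcal{P}) = \bigcup_{v \in S_j(v_0)} X_\mathcal{G}(v)$, where $S_j(v_0)$ is the set of vertices reachable from $v_0$ by a directed walk of length exactly $j$. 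For part (2), taking the union over all $j \geq 0$ yields $\overline{\mathcal{P}} = \bigcup_{v \in R(v_0)} X_\mathcal{G}(v)$ where $R(v_0) = \bigcup_{j \geq 0} S_j(v_0)$ is the (finite) set of vertices reachable from $v_0$ in $\mathcal{G}$, and Theorem \ref{topthm}(3) again finishes.

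For part (3), the converse direction is immediate: a one-sided sofic shift $Y$ presented by a graph $\mathcal{G}'$ equals $\bigcup_{v \in V(\mathcal{G}')} X_{\mathcal{G}'}(v)$, a finite union of path sets and hence a path set by Theorem \ref{topthm}(3), while $\sigma(Y) \subseteq Y$ follows because shifting a walk starting at $v$ yields a walk starting at another vertex of $\mathcal{G}'$. For the forward direction, suppose $\sigma(\mathcal{P}) \subseteq \mathcal{P}$. Then $\overline{\mathcal{P}} = \mathcal{P}$, and part (2) immediately gives $\mathcal{P} = \bigcup_{v \in R(v_0)} X_\mathcal{G}(v)$. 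To convert this into a sofic presentation in the sense of Definition \ref{de13}, I would let $\mathcal{G}'$ be the induced labelled subgraph of $\mathcal{G}$ on the vertex set $R(v_0)$; since every out-edge from a reachable vertex ends at a reachable vertex, walks in $\mathcal{G}'$ from any $v \in R(v_0)$ coincide with walks in $\mathcal{G}$ from $v$, so $X_{\mathcal{G}'}(v) = X_\mathcal{G}(v)$ for every $v \in V(\mathcal{G}')$. Hence $\mathcal{P} = \bigcup_{v \in V(\mathcal{G}')} X_{\mathcal{G}'}(v)$ is the one-sided sofic shift presented by $\mathcal{G}'$. I anticipate the main point requiring care to be precisely this final bookkeeping step: Definition \ref{de13} takes the union over \emph{all} vertices of the presenting graph, so to match it literally one must truncate $\mathcal{G}$ to the induced subgraph on $R(v_0)$ and verify that the truncation does not remove any label sequences issuing from a reachable vertex.
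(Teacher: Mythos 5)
Your proof is correct and follows essentially the same route as the paper's: writing $\sigma^j(\mathcal{P})$ and $\overline{\mathcal{P}}$ as finite unions of path sets $X_\mathcal{G}(v)$ over vertices of the same graph, invoking Theorem~\ref{topthm}(3), and identifying the shift closure with the one-sided sofic shift of the graph restricted to vertices reachable from $v_0$. Your explicit passage to the induced subgraph on $R(v_0)$ is just the paper's upfront ``pruned graph'' assumption carried out by hand, so there is no substantive difference.
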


This result raises a  realizability problem when treating
 sofic shifts as path sets.  The {\em realizability problem} is: Given a sofic shift $Y$ with a labeled graph presentation  $\sG$,  
 construct a pointed graph presentation $(\sH,v')$ for it as
a path set.  One such  presentation can be obtained by a standard  construction given in Theorem ~\ref{rrthm},
where we show that  every path set has a right-resolving presentation.
In this construction the  new graph presentation ($\sH, v')$ obtained  may be exponentially larger in size 
(as measured by the number of vertices)  than
the original presentation  $\mathcal{G}$ of the sofic shift.  
We show  that   exponential blow-up in size is sometimes unavoidable in general, 
 in that there exists a
 family of sofic shifts $Y$ whose  minimal right-resolving presentations
  as a path set $(\sH, v_0)$  requires  an exponentially larger number of states  than its
 minimal right-resolving presentation as a sofic shift, see Example \ref{ex34}.

Third, we show closure of path sets under a decimation (or fractionation) operation. 
Given $j \ge 0, m \ge 1$ and define
the {\em decimation map} $\psi_{j, m}: \sA^{\NN} \to \sA^{\NN}$ by
$$
\psi_{j, m}(a_0 a_1 a_ 2 \cdots) := (a_j a_{j+m} a_{j+2m} \cdots)
$$
The decimation  operation extracts the digits of the path set in a specified 
infinite arithmetic progression of indices.


\begin{thm} \label{fracthm} {\em (Decimation of Path Sets)}\\
\indent  
(1) For any path set  $\mathcal{P}$, and any $(j, m)$ with  $j \ge 0$ and $ m \ge 1$,  the decimated  set
$$
\sP_{j, m} := \psi_{j, m}(\sP) = \{ \psi_{j, m}(x): ~ x \in \sP \}
$$ 
 is  a path set.
 
 (2) Suppose  that the path set $\mathcal{P}$ is shift-invariant.  Then every decimated set $\sP_{j, m}$
 is shift-invariant. In addition,  for fixed $m$ all the sets $\sP_{j, m}$ for $j \ge 0$ are equal.
 (We may use the abbreviated notation $\sP_m$ in this case.)
 \end{thm}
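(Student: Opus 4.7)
The plan is to treat parts (1) and (2) separately. For part (1), I would first reduce to the case $j=0$ via the factorization $\psi_{j,m} = \psi_{0,m} \circ \sigma^j$, which gives $\mathcal{P}_{j,m} = \psi_{0,m}(\sigma^j(\mathcal{P}))$; since $j$-fold iteration of Theorem~\ref{softhm}(1) shows $\sigma^j(\mathcal{P})$ is itself a path set, it suffices to show $\psi_{0,m}(\mathcal{Q})$ is a path set for every path set $\mathcal{Q}$. To construct a presentation, I start with any $(\mathcal{G}, v_0)$ presenting $\mathcal{Q}$ and define a pointed graph $(\mathcal{H}, v_0)$ on the same vertex set by declaring, for every length-$m$ walk $u = u_0 \to u_1 \to \cdots \to u_m = w$ in $\mathcal{G}$ with edge labels $b_0, b_1, \ldots, b_{m-1}$, a corresponding edge $u \to w$ in $\mathcal{H}$ carrying the label $b_0$. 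An infinite walk in $\mathcal{H}$ from $v_0$ corresponds bijectively, via concatenation of the encoded length-$m$ blocks, to an infinite walk in $\mathcal{G}$ from $v_0$, and under this correspondence the $\mathcal{H}$-label sequence is precisely the $\psi_{0,m}$-image of the $\mathcal{G}$-label sequence. Hence $X_\mathcal{H}(v_0) = \psi_{0,m}(\mathcal{Q})$, proving (1). Identifying $\mathcal{H}$-edges that share a triple $(u, w, b_0)$ preserves the non-redundancy property without altering $X_\mathcal{H}(v_0)$.

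For part (2), shift-invariance of $\mathcal{P}_{j,m}$ follows from the commutation $\sigma \circ \psi_{j,m} = \psi_{j,m} \circ \sigma^m$, which is immediate on coordinates (both maps send $x$ to $(x_{j+m}, x_{j+2m}, \ldots)$). Shift-invariance of $\mathcal{P}$ then gives $\sigma(\mathcal{P}_{j,m}) = \psi_{j,m}(\sigma^m(\mathcal{P})) \subseteq \psi_{j,m}(\mathcal{P}) = \mathcal{P}_{j,m}$. For the equality $\mathcal{P}_{j,m} = \mathcal{P}_{0,m}$, the containment $\mathcal{P}_{j,m} \subseteq \mathcal{P}_{0,m}$ is immediate from the part~(1) factorization combined with $\sigma^j(\mathcal{P}) \subseteq \mathcal{P}$. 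For the reverse, given $y = \psi_{0,m}(x)$ with $x \in \mathcal{P}$, I observe that any $z = wx$ with $|w|=j$ automatically satisfies $\psi_{j,m}(z) = \psi_{0,m}(x) = y$, so the task reduces to showing that every $x \in \mathcal{P}$ admits some $j$-letter prefix $w$ with $wx \in \mathcal{P}$, i.e., that $\mathcal{P} \subseteq \sigma^j(\mathcal{P})$.

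The main obstacle I expect is precisely this reverse inclusion. While $\sigma^j(\mathcal{P}) \subseteq \mathcal{P}$ is free from shift-invariance, the reverse requires $\sigma^j$ to act surjectively on $\mathcal{P}$, which is genuinely stronger (a shift-invariant path set can easily contain transient points with no $\sigma$-preimage). I would recover what is needed by invoking Theorem~\ref{softhm}(3) to realize $\mathcal{P}$ as a sofic shift, then passing to a presentation in which every vertex is reachable and possesses at least one incoming edge (the standard essential-trimming procedure, which discards vertices whose label futures are already realized from elsewhere). On such a presentation, every walk representing an $x \in \mathcal{P}$ extends backward by one edge while staying in $\mathcal{P}$, and iterating $j$ times supplies the required prefix $w$ and hence the desired equality.
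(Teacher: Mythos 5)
Your part (1) is correct and takes essentially the paper's route with a cleaner presentation step: the same reduction to $j=0$ via $\psi_{j,m}=\psi_{0,m}\circ\sigma^{j}$ together with Theorem~\ref{softhm}(1), followed by an $m$-block graph construction. Your graph (same vertex set, one edge for each length-$m$ walk, labelled by the \emph{first} symbol of the block, with redundant triples identified) is simpler than the paper's construction, which keeps the one-step edges out of $v$ and then labels each subsequent $m$-block edge by its \emph{last} symbol; both correctly present $\psi_{0,m}(\sP)$. The first half of (2), shift-invariance of $\sP_{j,m}$ via $\sigma\circ\psi_{j,m}=\psi_{j,m}\circ\sigma^{m}$, is also fine.

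The gap is in your last step for the equality $\sP_{j,m}=\sP_{0,m}$. You reduce it, correctly, to $\sP\subseteq\sigma^{j}(\sP)$, but the proposed source of this inclusion --- trimming a sofic presentation of $\sP$ to one in which every vertex has an incoming edge --- does not exist in general. Take vertices $u,w$ with an edge $u\to w$ labelled $1$ and a loop at $w$ labelled $0$: the associated one-sided sofic shift $\sP=\{10^{\infty},0^{\infty}\}$ is a shift-invariant path set in the paper's sense ($\sigma(\sP)\subseteq\sP$), yet $10^{\infty}$ has no $\sigma$-preimage in $\sP$, so $\sP\not\subseteq\sigma(\sP)$; any presentation in which every vertex had an in-edge would force $\sP\subseteq\sigma(\sP)$, and your rule of discarding vertices whose futures are realized elsewhere cannot remove $u$ here, since its future $10^{\infty}$ is realized nowhere else. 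The same example shows the difficulty is not repairable under the containment reading of ``shift-invariant'': $\sP_{1,2}=\{0^{\infty}\}\neq\{10^{\infty},0^{\infty}\}=\sP_{0,2}$, so the equality of the $\sP_{j,m}$ genuinely needs the stronger hypothesis $\sigma(\sP)=\sP$. That is in fact how the paper's own (two-line) proof uses the hypothesis: it writes $\sP_{j,m}=\psi_{0,m}(\sigma^{j}(\sP))=\psi_{0,m}(\sP)=\sP_{0,m}$, i.e.\ it invokes $\sigma^{j}(\sP)=\sP$ outright. Under that surjective reading your reverse inclusion is immediate and the detour through Theorem~\ref{softhm}(3) and trimming is unnecessary; under the weaker reading the step you need is false. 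So you spotted a real subtlety, but the proposed repair fails, and the correct resolution is to strengthen (or correctly interpret) the hypothesis rather than to seek an all-in-edge presentation.
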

 
 We make some further definitions associated with decimation operations.
\begin{defn}\label{de16a}
For a fixed $m \ge 1$,  the {\em $m$-kernel} of 
 path set $\sP$ is  the collection of all path sets
$$
\Ker_m(\sP) := \{ \psi_{j, m^k}(\sP) :  j \ge 0, k \ge 0\}.
$$
\end{defn}

A priori, the  number of distinct path sets in this collection $\Ker_m (\sP)$ could be  finite or infinite,
depending on $\sP$.

\begin{defn}\label{de17a}
A path set $\sP$ is an {\em $m$-automatic path set}
for a given $m \ge 2$ 
whenever its $m$-kernel $\Ker_m(\sP)$ is a finite collection of path sets. 
\end{defn}

This definition is formulated  in analogy to that  of {\em $m$-automatic sequence}, as  given in
Allouche and Shallit \cite{AS03}. The property of  being an $m$-automatic
sequence  is a property of a single infinite sequence (e.g. of a single path),
while our definition above concerns a property of a set of paths.
In our  terminology  an earlier result
of  Cobham \cite{Cob69} established:
A single infinite sequence is an $m$-automatic sequence (in the sense of \cite{AS03}) 
if and only if its $m$-kernel is finite
(cf. Allouche and Shallit \cite{AS92}). 
Note that in  general  the  shift closure of an $m$-automatic sequence
need not be a path set: The Thue-Morse sequence \cite[p. 152]{AS03} is a $2$-automatic sequence
with $\sA=\{0, 1\}$, but its shift closure, the {\em Morse shift} (cf. \cite[pp. 457--459]{LM95}), is not  a path set
\footnote{The Morse shift is known to be a minimal shift and to not
contain any periodic word. 
Each path set contains an eventually periodic path, so a 
shift-invariant path set always contains a periodic word.}.

The full shift on a fixed alphabet  is an $m$-automatic path set for every $m \ge 1.$
We leave it as an open problem to characterize all $m$-automatic path sets.

Fourth, we characterize  path sets in terms of a finiteness property under the shift operation and intersection.


\begin{defn}
 For an alphabet $\sA$ and  $j \in \mathcal{A}$,
define the {\em prefix set}
   $\sZ_j := j \mathcal{A}^{\mathbb N}$
    to be the closed set of all sequences whose
  initial digit is $j$ and whose subsequent digits are arbitrary.
    \end{defn}

 It is easy to see that each prefix set  $\sZ_j$ is a path set.  Using these sets we obtain
the following structural characterization of path sets.


\begin{thm} \label{chartheorem} {\em (Structure Theorem for Path Sets)}
The following are equivalent.
\begin{enumerate}
\item
$\sP$ is a path set in $\sC(\sA)$.
\item
$\sP$ is a closed subset of $\mathcal{A}^\mathbb{N}$ that has  the property that
there is some finite collection 
of subsets of $\sP$ which contains $\sP$ and which is closed 
under the operations:
\begin{enumerate}
\item
apply the one-sided shift,
\item
 intersect  with a prefix set $\sZ_j$, for any $j \in \mathcal{A}$.
\end{enumerate}
\end{enumerate}
\end{thm}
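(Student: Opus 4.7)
My plan is to prove the two implications separately. For (1) $\Rightarrow$ (2), I would suppose $\sP = X_\sG(v_0)$ for a pointed graph $(\sG, v_0)$ with finite vertex set $V$, and take as the finite collection $\mathcal{F}$ the family consisting of the empty set together with all sets of the form $U_W := \bigcup_{w \in W} X_\sG(w)$ for $W \subseteq V$, and all prefixed versions $j \cdot U_W := \{jx : x \in U_W\}$ for $j \in \sA$ and $W \subseteq V$. This family is finite, contains $\sP = U_{\{v_0\}}$, and the required closure conditions follow by direct computation: $\sigma(U_W) = U_{W'}$ where $W' = \{w' : \exists\, w \in W \text{ with an edge } w \to w' \text{ in } \sG\}$; $\sigma(j \cdot U_W) = U_W$; $U_W \cap \sZ_j = j \cdot U_{W'_j}$ where $W'_j = \{w' : \exists\, w \in W \text{ with a } j\text{-labeled edge } w \to w'\}$; and $(j \cdot U_W) \cap \sZ_{j'}$ equals $j \cdot U_W$ when $j = j'$ and $\emptyset$ otherwise.

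For the converse (2) $\Rightarrow$ (1), I would build a pointed graph $(\sH, v_0)$ whose vertex set is $\mathcal{F}$ and whose distinguished vertex is $v_0 := \sP$. For each pair $(S, j) \in \mathcal{F} \times \sA$ with $S \cap \sZ_j \neq \emptyset$, the closure hypothesis places $T := \sigma(S \cap \sZ_j)$ in $\mathcal{F}$, so one inserts a single edge from $S$ to $T$ carrying the label $j$. Every nonempty $S \in \mathcal{F}$ then has at least one outgoing edge, since any element of $S$ must begin with some letter of $\sA$.

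The heart of the argument is to verify $X_\sH(\sP) = \sP$. The inclusion $\sP \subseteq X_\sH(\sP)$ is routine: given $x = x_0 x_1 x_2 \cdots \in \sP$, set $S_0 := \sP$ and $S_{k+1} := \sigma(S_k \cap \sZ_{x_k})$; an easy induction gives $\sigma^k(x) \in S_k$, so each edge $S_k \to S_{k+1}$ labeled $x_k$ is present, and the resulting walk from $\sP$ reads off $x$. The main obstacle lies in the reverse inclusion: given an arbitrary infinite walk $\sP = S_0 \to S_1 \to S_2 \to \cdots$ in $\sH$ with edge labels $j_0, j_1, j_2, \dots$, one must show $j_0 j_1 j_2 \cdots \in \sP$. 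Since the walk a priori yields only a formal label sequence, my plan is to produce, for each $N$, an element $z^{(N)} \in \sP$ whose first $N+1$ symbols are $j_0 j_1 \cdots j_N$. I would accomplish this by a backward unrolling: choose any $y_N \in S_N \cap \sZ_{j_N}$, then for $k = N, N-1, \dots, 1$ the identity $S_k = \sigma(S_{k-1} \cap \sZ_{j_{k-1}})$ lifts $y_k$ to some $y_{k-1} \in S_{k-1} \cap \sZ_{j_{k-1}}$ with $\sigma(y_{k-1}) = y_k$, so that $y_{k-1} = j_{k-1} y_k$. The resulting $z^{(N)} := y_0$ lies in $S_0 = \sP$ and has the desired prefix, so $z^{(N)}$ converges in the product topology to $j_0 j_1 j_2 \cdots$; closedness of $\sP$ then forces the limit to lie in $\sP$, completing the proof.
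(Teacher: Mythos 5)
Your proof is correct and follows essentially the same route as the paper's: for (1)$\Rightarrow$(2) a finite family built from unions of vertex path sets of a presentation, and for (2)$\Rightarrow$(1) the graph on the collection with the unique $j$-labelled edge from $S$ to $\sigma(S \cap \sZ_j)$ when $S \cap \sZ_j \neq \emptyset$, whose path set at the vertex $\sP$ is shown to equal $\sP$. The only differences are minor: by using arbitrary vertex subsets $W$ together with the prefixed sets $j\cdot U_W$ you avoid the paper's appeal to a right-resolving presentation (Theorem~\ref{rrthm}), and your forward-induction plus backward-unrolling/closedness argument supplies in full the verification that the paper compresses into ``one may check.''
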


This finiteness property differs from  the finiteness property imposed  in the
definition of  $m$-automatic path set above. 

Fifth,  we study the  notion of  entropy for path sets. 
Since these sets need not be  shift-invariant, a priori there  are  two  distinct 
notions of topological entropy, as follows.


\begin{defn} \label{entdef} 
(1) The \emph{ path topological entropy} of a path set $\mathcal{P}$ is
\begin{equation} \label{Eqdef14} H_{p}(\mathcal{P}) := \limsup_{n \rightarrow \infty} \frac{1}{n} \log N^I_n(\mathcal{P}),
\end{equation}
where $N^I_n(\mathcal{P})$ denotes the number of distinct \emph{initial} blocks of length $n$ from $\mathcal{P}$.
\newline (2) The \emph{topological entropy} of a path set $\mathcal{P}$ is
\begin{equation} \label{htopeq} H_{top}(\mathcal{P}) := \limsup_{n \rightarrow \infty} \frac{1}{n} \log N_n(\mathcal{P}),
\end{equation}
where $N_n(\mathcal{P})$ is the number of distinct 
 blocks of length $n$ occurring anywhere in 
 the symbol sequences  in  $\mathcal{P}$.
\end{defn}

These definitions are  independent of the alphabet $\sA$ in which the path set $\sP$ is viewed
as belonging.  
The  definition (2) above 
corresponds to the definition of topological entropy for shift spaces (Adler and Marcus \cite{AM79}), given by the shift
closure $\overline{\mathcal{P}}.$
In fact  these two notions of topological entropy agree.
 

\begin{thm} \label{entthm} {\em (Topological Entropy Equivalence)}
For a path set $\mathcal{P}$,
\begin{equation} \label{enteq} H_p(\mathcal{P}) = H_{top}(\mathcal{P}) = H_{top}(\overline{\mathcal{P}}).
\end{equation}
All $\limsup$'s in the definitions are attained as limits.
\end{thm}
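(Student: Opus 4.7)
The plan is to establish all three equalities by reducing the entropy computations to a counting problem on a finite graph. First I would invoke Theorem~\ref{rrthm} to choose a right-resolving presentation $(\mathcal{G}, v_0)$ of $\mathcal{P}$ with vertex set $V$, then prune $\mathcal{G}$ by deleting vertices not reachable from $v_0$ and vertices with no infinite outgoing walk; this leaves $\mathcal{P}$ unchanged and ensures that every vertex lies on an infinite walk from $v_0$. Right-resolving then guarantees that the label sequence of a walk starting at any fixed vertex $v$ determines the walk, so $N^I_n(X_{\mathcal{G}}(v)) = a_n(v)$, where $a_n(v)$ denotes the number of length-$n$ walks in $\mathcal{G}$ issuing from $v$.

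The equality $H_{top}(\mathcal{P}) = H_{top}(\overline{\mathcal{P}})$ drops out from the fact that the block sets of $\mathcal{P}$ and $\overline{\mathcal{P}}$ coincide: by definition $\overline{\mathcal{P}} = \bigcup_{j \geq 0} \sigma^j(\mathcal{P})$, and a length-$n$ block at position $k$ in $\sigma^j(x)$ is the block at position $k+j$ in $x \in \mathcal{P}$, so $N_n(\overline{\mathcal{P}}) = N_n(\mathcal{P})$. For the existence of the limit in the definition of $H_{top}$, I would use the submultiplicativity
\[
N_{m+n}(\mathcal{P}) \;\leq\; N_m(\mathcal{P}) \cdot N_n(\mathcal{P}),
\]
which holds because every length-$(m+n)$ block in $\mathcal{P}$ factors uniquely as a length-$m$ block followed by a length-$n$ block. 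Fekete's lemma applied to $\log N_n(\mathcal{P})$ then upgrades the $\limsup$ to a $\lim$.

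The main step is $H_p(\mathcal{P}) = H_{top}(\mathcal{P})$. The inequality $H_p \leq H_{top}$ is immediate from $N^I_n(\mathcal{P}) \leq N_n(\mathcal{P})$. For the reverse, I would establish the combinatorial bound
\[
N_n(\mathcal{P}) \;\leq\; |V| \cdot N^I_{n+|V|}(\mathcal{P}).
\]
Every length-$n$ block of $\mathcal{P}$ is the label of a length-$n$ walk from some reachable vertex $v$, so $N_n(\mathcal{P}) \leq \sum_{v \in V} a_n(v) \leq |V| \cdot \max_v a_n(v)$. For each $v$ fix a walk $\pi_v$ from $v_0$ to $v$ of length $k_v \leq |V|$. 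By right-resolving, distinct length-$n$ walks $\tau$ from $v$ yield distinct label sequences of the concatenations $\pi_v \tau$, and each such label is an initial block of $\mathcal{P}$ of length $n+k_v$ since $\tau$ can be extended to an infinite walk. Combined with the monotonicity $N^I_m(\mathcal{P}) \leq N^I_{m+1}(\mathcal{P})$ (every initial block extends, thanks to pruning), this gives $a_n(v) \leq N^I_{n+|V|}(\mathcal{P})$.

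Taking logarithms and dividing by $n$, the sandwich $N^I_n(\mathcal{P}) \leq N_n(\mathcal{P}) \leq |V| \cdot N^I_{n+|V|}(\mathcal{P})$ forces $\tfrac{1}{n}\log N^I_n(\mathcal{P})$ to share the common limit of $\tfrac{1}{n}\log N_n(\mathcal{P})$, which simultaneously yields $H_p(\mathcal{P}) = H_{top}(\mathcal{P})$ and upgrades the $\limsup$ in the definition of $H_p$ to a $\lim$. I expect the main obstacle to be the right-resolving step in the combinatorial bound: without right-resolving, the number of distinct label sequences of length-$n$ walks from $v$ can be strictly less than $a_n(v)$, and the reverse comparison between $N_n$ and $N^I_{n+|V|}$ would fail. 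Once Theorem~\ref{rrthm} is available, the rest is bookkeeping.
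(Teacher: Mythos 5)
Your proposal is correct and follows essentially the same route as the paper's proof: a reachable right-resolving presentation from Theorem~\ref{rrthm}, equality of the block sets giving $H_{top}(\mathcal{P}) = H_{top}(\overline{\mathcal{P}})$, and a comparison of the form $N_n(\mathcal{P}) \leq |V|\, N^I_{n+|V|}(\mathcal{P})$ obtained by prepending a bounded-length path from the marked vertex to each reachable vertex and using the right-resolving property to keep labels distinct. The only cosmetic differences are that the paper arranges the prepending maps to have disjoint images (avoiding the harmless factor $|V|$), while you invoke Fekete's lemma explicitly to justify that the $\limsup$'s are limits, a point the paper leaves implicit.
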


Since the topological entropy of a sofic shift is well understood, this allows us 
 to compute the path topological entropy $H_p(\mathcal{P})$, using a suitably
 chosen presentation of $\sP$. 
 A presentation $(\sG, v)$ of a path set  is {\em reachable} if each vertex of $\sG$ can be reached by
 a directed path from $v$.  We recall a standard definition.
 

\begin{defn}
  A labeled directed graph $\sG$
is   {\em right-resolving} if from each vertex of $\sG$, all  the exit edges  have distinct labels.
  \end{defn}

 In Theorem \ref{rrthm} in Section \ref{sec3} we show that every path set has a 
 reachable   presentation that is right-resolving. Our final result gives
 conditions where  the
 standard formula for topological entropy for 
 shifts of finite type or sofic shifts extends to path sets.


\begin{thm} \label{Percor} {\em (Topological Entropy Formula  for Path Sets)}\\
Let $\mathcal{P}$ be a path set with reachable presentation 
$(\mathcal{G}, v)$ having a right-resolving  labeled  graph $\sG$. Then
\begin{equation} 
H_p(\mathcal{P}) = \log \lambda,
\end{equation}
where $\lambda$ is the spectral radius of the adjacency matrix $A$ of the underlying 
directed
graph $G$ of $\mathcal{G}$. If in addition $\mathcal{G}$ is irreducible, 
then so is $A$, and then $\lambda$ is the Perron eigenvalue of $A$.
\end{thm}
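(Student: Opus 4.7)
\medskip

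The plan is to reduce the entropy computation to a standard growth-rate estimate for powers of the adjacency matrix $A$ of $G$, using the right-resolving and reachability hypotheses to control the bookkeeping.

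First, I would use Theorem \ref{entthm} to replace $H_p(\sP)$ by the growth rate of $N_n^I(\sP)$, the number of initial length-$n$ blocks of $\sP$. The first key step is to observe that, because $(\sG,v)$ is right-resolving, the map that sends a finite walk of length $n$ in $\sG$ starting at $v$ to its sequence of edge labels is \emph{injective}: at each vertex along the walk, the next edge is uniquely recovered from the next label. By the definition of path set it is also surjective onto the set of initial blocks of $\sP$. Hence
\[
N_n^I(\sP) \;=\; \#\{\text{length-}n\text{ walks in } G \text{ starting at } v\} \;=\; \mathbf{e}_v^{\,T} A^n \mathbf{1},
\]
where $\mathbf{e}_v$ is the standard basis vector at $v$ and $\mathbf{1}$ is the all-ones vector.

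Next, I would bound the growth rate of $\mathbf{e}_v^{\,T} A^n \mathbf{1}$ in terms of the spectral radius $\lambda$ of $A$. The upper bound is immediate from Gelfand's formula: $\mathbf{e}_v^{\,T} A^n \mathbf{1}\le \|A^n\|$ for any matrix norm, so $\limsup_n \tfrac{1}{n}\log(\mathbf{e}_v^{\,T} A^n \mathbf{1})\le\log\lambda$. For the matching lower bound I would use reachability: since every vertex $w$ of $G$ is reached from $v$ by a walk of some length $k_w$, each walk of length $n$ from $w$ extends backward to a walk of length $n+k_w$ from $v$, so $\mathbf{e}_v^{\,T} A^{n+k_w} \mathbf{1}\ge \mathbf{e}_w^{\,T} A^n \mathbf{1}$. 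Summing shows that $\mathbf{e}_v^{\,T} A^{n+K} \mathbf{1}$ dominates $\tfrac{1}{|V|}\,\mathbf{1}^{\,T} A^n \mathbf{1}$ with $K=\max_w k_w$, and $\mathbf{1}^{\,T} A^n \mathbf{1}$ has exponential growth rate exactly $\log\lambda$ by Gelfand applied to the nonnegative matrix $A$ (combined with the fact that the spectral radius of a nonnegative matrix is realized in $\|A^n\|_1$). Therefore
\[
\limsup_{n\to\infty}\tfrac{1}{n}\log N_n^I(\sP)\;=\;\log\lambda,
\]
and Theorem \ref{entthm} upgrades $\limsup$ to $\lim$, yielding $H_p(\sP)=\log\lambda$.

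Finally, for the irreducibility clause, I would note that if the labeled graph $\sG$ is irreducible (i.e. the underlying $G$ is strongly connected), then its adjacency matrix $A$ is an irreducible nonnegative integer matrix in the sense of Perron--Frobenius, so $\lambda$ is a simple real eigenvalue of maximum modulus with a strictly positive eigenvector; this is the Perron eigenvalue.

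The main obstacle I expect is the lower bound on the growth of $\mathbf{e}_v^{\,T} A^n \mathbf{1}$, since a priori the strongly connected component of $G$ carrying the spectral radius might be invisible from $v$; reachability of the presentation is precisely what rules this out, and must be invoked explicitly. The right-resolving hypothesis is the other indispensable ingredient, since without it the counting equality $N_n^I(\sP)=\mathbf{e}_v^{\,T} A^n \mathbf{1}$ would be replaced by an inequality and the upper bound $\log\lambda$ could fail to be tight.
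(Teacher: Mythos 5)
Your proof is correct in substance, but it takes a genuinely different and more self-contained route than the paper. The paper deduces the formula in two steps: Theorem \ref{entthm} gives $H_p(\sP)=H_{top}(\overline{\sP})$, reachability identifies $\overline{\sP}$ with the one-sided sofic shift presented by $\sG$, and then Proposition \ref{Perprop} (which cites Kitchens' entropy formula for right-resolving presentations) gives $\log\lambda$. You instead bypass both Proposition \ref{Perprop} and, in effect, Theorem \ref{entthm}: since $H_p$ is by definition the growth rate of $N^I_n(\sP)$, your identification $N^I_n(\sP)=\mathbf{e}_v^{\,T}A^n\mathbf{1}$ (right-resolving gives injectivity of the labeling of walks from $v$) plus the Gelfand upper bound and the reachability lower bound (prepending a bounded-length walk from $v$ to each vertex $w$) proves the statement directly; the appeal to Theorem \ref{entthm} to ``upgrade limsup to lim'' is not even needed for the equality $H_p(\sP)=\log\lambda$. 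Notably, your prepending argument is the same device the paper uses inside its proof of Theorem \ref{entthm}, so you have essentially inlined that comparison at the matrix level. What your approach buys is independence from the sofic-shift entropy formula; what the paper's buys is brevity given its earlier results.

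One bookkeeping point deserves explicit mention: both your counting identity $N^I_n(\sP)=\mathbf{e}_v^{\,T}A^n\mathbf{1}$ and your monotonicity step $\mathbf{e}_v^{\,T}A^{n+K}\mathbf{1}\ge\mathbf{e}_v^{\,T}A^{n+k_w}\mathbf{1}$ require that every finite walk from $v$ extend to an infinite walk, i.e.\ that the presentation be pruned (no stranded states); ``reachable'' alone does not guarantee this, and without it one only gets $N^I_n(\sP)\le\mathbf{e}_v^{\,T}A^n\mathbf{1}$. This is easily repaired: pass to the pruned subgraph, which presents the same $\sP$, remains reachable and right-resolving, and has the same spectral radius since the discarded vertices induce an acyclic (hence nilpotent) block. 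The paper is guilty of the same implicit assumption (its proof of Theorem \ref{entthm} invokes ``no stranded states''), so this is a caveat to state, not a flaw in your strategy.
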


Here $A$ is the adjacency matrix of the unlabeled, directed graph $G$ underlying   $\mathcal{G}$.
It takes values in $\mathbb{N} \cup \{0\}$, so if it is irreducible it follows by the 
Perron-Frobenius Theorem that the Perron eigenvalue $\lambda$ is well defined.
 It is a positive real number whose magnitude is maximal among the eigenvalues of $A$.

\subsection{Related Work}\label{sec12}

There  is  a large literature on  concepts similar to path sets
 in automata theory, semigroups, and 
descriptive set theory.  The book of Perrin and Pin \cite{PP04} presents many results about
infinite paths in these contexts, viewing them as infinite words.  The concepts 
covered typically differ in their
level of generality from path sets. We mention some of these below, and also
indicate relations to terminology in these areas.

In automata theory   a finite directed labeled graph is called
a {\em finite automaton}. 
A {\em regular language} corresponds to  the set of  finite labeled   paths
produced as output by a finite automaton (directed labelled graph) with fixed initial state.
This theory goes back to Kleene and Moore; an interesting nonstandard treatment
is given in Conway \cite{Con71}.
Infinite paths in graphs have been considered in automata theory; treatments of this topic
appear in  Eilenberg \cite[Chap. 14]{Eil74A}, B\'{e}al and Perrin \cite{BP97},  and
Perrin and Pin \cite{PP04}. 
The treatment of Eilenberg \cite{Eil74A}  considers  a finite state automaton 
$\fA :=(Q, I, T)$ in which $Q$ is  a finite labeled directed graph, 
$I$ specifies a set of vertices called {\em initial states}, and $T$ specifies a set of 
vertices called {\em terminal states}. 
Various collections of infinite paths have been studied in this context.
In Chapter 14 Eilenberg introduces the set   $||\fA||$
of all {\em successful $\omega$-paths}, where a successful
$\omega$-path is any infinite path in $Q$
that starts at some  state in $I$ and visits some state in $T$ infinitely many times.
The notion  of path set $(\sG, v)$ defined in this paper corresponds to
the special case of this concept in which   $Q=\sG$,
the initial state $I = \{ v\},$ and the terminal state set $T$ is the set of  all states.
But this special case is not singled out in the automata theory literature, to our knowledge.

A more general notion in automata theory of {\em successful $\omega$-paths} accepts
only infinite paths that visit infinitely often exactly the vertices in one of a specified collection $\sT=\{ T_1, T_2, ..., T_m\}$
of subsets of vertices of $Q$.
The totality of  sets $||\sA||$ in
this generalized sense were characterized in 1966 by McNaughton \cite{McN66} 
(see  also \cite[Chap. 14]{Eil74A}, \cite[Sec. 3.2]{PP04});
they form a larger collection of sets which is closed under complement (unlike path sets).
In  Theorem 1.3 of Chapter 14 Eilenberg  classifies all $||\sA||$
in which both the initial  state $I$ and terminal state $T$ are allowed to vary.
In this case the set of all $||\sA||$ is shift-invariant (unlike path sets).

Turning to the  symbolic dynamics literature,  path
sets have appeared under the term ``follower set," for example
in Jonowska and Marcus \cite{JM94}.
However the  term  ``follower set'' has  been used  with at least two alternate meanings: 
  
  (1) One notion of  ``follower set" is 
  the set of all possible finite paths exiting a fixed vertex, see
   Lind and Marcus \cite[Definition 3.3.7]{LM95}.  We  may call this concept  a  {\em finite follower set}.
   (Note however that a finite follower set may contain infinitely many elements!)  
   A characterization of two-sided sofic shifts is given in terms of the finiteness of the
   collection of all possible finite (right) follower sets ( \cite[Theorem 3.2.10]{LM95}). In Appendix B we provide a
   similar characterization for one-sided sofic shifts in terms of finite follower sets.
  
  (2) Another notion  of ``follower set"  
     is   the set $\overline{F}(W)$  of all possible one-sided (infinite) paths $\alpha_0\alpha_1 \alpha_2...$  that can follow 
 a given finite word $ W = \alpha_{-k}\cdots  \alpha_{-3} \alpha_{-2}\alpha_{-1}$ in a two-sided   sofic shift $Y$.
 We  call this concept  an {\em infinite follower set}.
 Occurrences of this finite word may terminate in  several different vertices, so a priori this notion
 differs from the notion of path set in this paper. 
   Infinite follower sets  appear in  the
 work of Fischer \cite{Fis75}, \cite{Fis75b} 
 connecting   two-sided sofic shifts with  graphs.
 Further work was done  by Krieger \cite{Kr83}, \cite{Kr87}. 
 Using this concept Fischer \cite[Theorem 2]{Fis75b}
characterized  two-sided sofic shifts as those
shift-closed subsets of $\sA^{\mathbb{Z}}$ having only
a finite number of distinct  infinite follower sets.

   Theorem \ref{th61} of this paper shows
    that the  concepts of path set and infinite follower set
   are distinct:  for a fixed alphabet $\sA$ infinite follower sets 
 are always path sets, but  the converse assertion does not always hold.
    This result also  shows that  all path sets of an alphabet $\sA$ are representable as  infinite follower sets 
 on a larger  label  alphabet  $\sA^{'}$, which adds   one extra symbol.
 
\subsection{Applications}\label{sec13}
Graph-directed constructions of fractals involve two separate ingredients: 
\begin{enumerate}
\item
 A family of geometric maps being iterated; 
 \item 
A graph (or automaton) describing the allowable combinations of maps under iteration.
\end{enumerate}
Both ingredients together determine the final geometric fractal object.   The
same  geometric fractal object typically has many quite different  constructions, and
these  constructions may attach different addresses to the points
in them. 
The  path set concept  captures   that part of the geometric construction
of fractal objects that occurs
purely at the level of the address map. In particular, it  facilitates study of situations
where the underlying maps used in a geometric graph-directed construction or IFS are
{\em  held fixed}, while the underlying graph giving the address map is {\em varied}.
For example  Theorem \ref{topthm}  shows that in this circumstance
intersections and unions of fractal sets 
can be computed purely at the level of address maps.

  Path sets  arise in 
 $p$-adic versions of fractal constructions. In the paper \cite{AL11p}
 we introduce a notion of {\em $p$-adic path set fractal}, which is the
 image of a path set interpreting its address as a $p$-adic integer.
 We show that  such sets are constructible by a $p$-adic graph-directed
 construction with a constant ratio similarity.  
 We also  show that the resulting collection of  $p$-adic path sets is closed under
 the $p$-adic arithmetic operations of adding $r$ to a set  or multiplying  the set by $r$,
 where   $r= \frac{m}{n} \in \QQ \cap \ZZ_p$, e.g. $r$ is a 
 {\em $p$-integral} rational number.   A need for the path set concept 
 is forced by these operations,  because even  if one restricts the arithmetic operations to 
 apply only to (one-sided) shift invariant sets, the image path sets under the operations
 need not be shift-invariant.
 An  interesting feature of the arithmetic constructions in \cite{AL11p}
 is that the  associated   graph presentations  of the new path sets 
 constructed using  arithmetic operations can be  far from
 irreducible, sometimes having nested irreducible subcomponents, cf. 
 examples in the related paper \cite{AL11b}.
 
 In special circumstances the  Hausdorff dimension
of   fractal objects may  be computed using
 the topological entropy formulas  given in this paper.
In graph-directed constructions the Hausdorff dimension of the
resulting geometric fractal  typically depends in a complicated
way  on both the geometric maps and
the address map.
In general these dimensions can be computed using operators that require
mixing together both these ingredients, e.g. Mauldin and Williams \cite{MW88}. 
In  the special case where all the maps are similarities with constant ratio,  
the Hausdorff dimension  is often determinable from the address
map data  together with only knowledge of  the similarity ratio,
under extra hypotheses such as variants of the  open set condition
(Ngai and Wang \cite{NW01}, Bandt \cite{Ban06}, Lau and Ngai \cite{LN07}).
These extra hypotheses hold in the $p$-adic path set fractal
framework used in  \cite{AL11p} and \cite{AL11b}, and 
we use Hausdorff dimension calculations  in \cite{AL11b} to answer a question
raised in \cite[Sec. 4]{Lag09},  motivated by 
a number-theoretic problem of Erd\H{o}s.

There are several possible directions for further work. 
First, the  decimation operation in Theorem \ref{fracthm} of this paper
gives  a construction of  new path sets from old, worthy of further study. 
Recent papers study various sets closed under decimation operations
and determine their Hausdorff dimension (\cite{KPS11}, \cite{PSSS12}).
Second,   Feng and Wang \cite{FW09} 
have recently considered  the question of
which iterated function systems (given using self-similar maps) will generate the same limiting 
geometric object $F$. It asks when a generating IFS family, all giving this object, has a minimal element.
This only happens in special situations.
An analogous question in our framework concerns the problem of when does a path set 
have a unique minimal presentations, measured by the number of vertices in the underlying graph.
Third, one may consider path sets in 
 the study of  general systems of numeration, viewed for example 
in  the fiber systems model for such systems, as described in 
  Barat, Berth\'{e}, Liardet
and Thuswaldner \cite{BBLT06}.

\subsection{Contents}

Section \ref{sec2} presents  examples  illustrating some
 distinctive properties of path sets relative to one-sided sofic shifts.
Section \ref{sec3}  treats presentations of path sets and the question of how efficiently
they can be encoded relative to one-sided  sofic shifts.
 Section \ref{sec3b}  establishes the set-theoretic properties of path sets given in
  Theorem ~\ref{topthm}. 
Section \ref{sec4}  treats one-sided sofic shifts and  gives the proof of Theorem ~\ref{softhm}. 
Section \ref{sec4a} relates the notions of path sets and infinite follower sets for two-sided sofic shifts,
stated as Theorem \ref{th61}. It also makes remarks on minimality of presentations of sofic shifts and path sets.
Section \ref{sec5a} gives the proof of Theorem ~\ref{fracthm}.
Section \ref{sec6} proves the structural characterization in Theorem \ref{chartheorem}.
Section \ref{sec5} proves results on topological entropy, giving Theorem \ref{entthm} and Theorem \ref{Percor}. 
Appendix A (Section \ref{sec10}) gives a proof that Example \ref{ex33} in Section \ref{sec3} gives
a minimal right-resolving path set presentation of certain sofic shifts. Appendix B establishes a standard
equivalence between different definitions  of one-sided sofic shifts.

\subsection*{Acknowledgments} W. Abram received support from  an
NSF  Graduate Research Fellowship. 
J. C. Lagarias received  support from NSF grant  DMS-1101373.

%
%
%
%

\section{Examples }\label{sec2}

We  present  three simple examples of path sets illustrating some 
of their  distinctive features.
The first two examples illustrate  features coming from
the imposition of an initial condition (the choice of  initial vertex $v$), 
showing how shift-invariance may fail.
The  third example shows that path
sets are not closed under the complement operation in a fixed alphabet.

\begin{exmp}\label{ex21}
For the alphabet $\sA = \{0, 1, ..., n-1\}$ and a given symbol $j \in \sA$, 
consider the two-state graph $\sG$ pictured in Figure \ref{fig1}. 
It has $n$ distinct loops at vertex $v_1$, labelled $0, 1, ..., n-1$, respectively,
which are denoted schematically in the figure.
The directed graph $\sG$ is connected but not strongly
connected, i.e. it is reducible. 
The path set $X_{\sG}( v_0)$ is the prefix set
$\sZ_j = j \sA^{\mathbb{N}}$. This set  is not shift-invariant, but is of the form $W \overline{Y}$
where $W$ is a finite set, and $\overline{Y}$ is shift-invariant, being the full shift. 
We may  call 
path sets that are finite unions of sets of the  form $W {\overline{Y}}$  {\em prefix-shift-invariant}. 
The  sets  $\sZ_j$ are the simplest examples of path sets that are not  shift-invariant.
They  play an important role in the structure
Theorem \ref{chartheorem} for path sets. 
\end{exmp}

\begin{figure} 
	\centering
	\psset{unit=1pt}
	\begin{pspicture}(-80,0)(80,150)
		\newcommand{\noden}[2]{\node{#1}{#2}{n}}
		\noden{$v_0$}{-30,20}
		\noden{$v_1$}{30,20}
		\aline{n$v_0$}{n$v_1$}{j}
		\bcircle{n$v_1$}{270}{0,1,\ldots,n-1}
	\end{pspicture}
	\caption{A two-state presentation $(\mathcal{G}, v_0)$ of the path set $\sZ_1:= 1 \{ 0, 1\}^{\mathbb{N}}$}\label{fig1}
\end{figure}

\begin{exmp}\label{ex22}  
In Example \ref{ex21} the path sets are given as a finite prefix followed by a
shift-invariant set. Now 
consider the three-state graph $\sG$ pictured in
Figure \ref{fig2}, which uses edge labels drawn from
$\sA:= \{ 0, 1, 2\}$. The graph $\sG$ is irreducible. 
 The path sets $\sP_i =X_{\sG}( v_i)$ for $0 \le i \le 2$ consist of 
a single infinite path:
$\sP_0 = \{(012)^{\infty}\}, ~\sP_1=\{(120)^{\infty}\},  \sP_2= \{(201)^{\infty}\}$.
These path sets are not shift-invariant.  This example  cannot be partitioned into 
a finite number of path sets of the type $W \overline{Y}$ of example \ref{ex21}, where $W$ is a finite word,
and $\overline{Y}$ is itself a shift-invariant path set.

\begin{figure} 
	\centering
	\psset{unit=1pt}
	\begin{pspicture}(-80,0)(80,150)
		\newcommand{\noden}[2]{\node{#1}{#2}{n}}
		\noden{$v_0$}{0,90}
		\noden{$v_1$}{-30,15}
		\noden{$v_2$}{30,15}
		\bline{n$v_0$}{n$v_1$}{0}
		\bline{n$v_1$}{n$v_2$}{1}
		\bline{n$v_2$}{n$v_0$}{2}
	\end{pspicture}
	\caption{$(\sG, v_i)$ gives path sets consisting of one element}\label{fig2}
\end{figure}

\end{exmp}

\begin{exmp}\label{ex23}
({\em A path set whose complement is not a path set})
Consider the two state graph $\sG$ pictured in Figure \ref{fig3}
with edge labels drawn from  $\mathcal{A} = \{0,1\}$. 
Here the path set $\mathcal{P} = X_{\mathcal{G}}(v_0)$ associated to vertex $v_0$.
 is the set of all sequences in $\mathcal{A}^\mathbb{N}$ in which the block $11$ does not occur. 

We show by contradiction that the complement $\mathcal{P}^c := \sA^{\NN} \smallsetminus \mathcal{P}$ is not a path set.
Here $\mathcal{P}^c$
 is the set of 
all sequences in $\mathcal{A}^\mathbb{N}$ in which the sequence $11$ 
occurs at least once. 
Suppose  $\mathcal{P}^c$ were presented as a path set by a 
pointed graph $\mathcal{G}(v)$. Let $n = | \sG |$. 
Now $\mathcal{P}^c$ contains words with arbitrarily long initial substrings which do not contain the string $11$.
 It  follows that we can find an initial segment $a_0 a_1 \cdots a_k$ of a word in $\mathcal{P}^c$ subject to the following constraints:
\begin{enumerate}[(1)]
\item $a_0 a_1 \cdots a_k$   is given by an edge walk $e_0 e_1 \cdots e_k$ originating at the initial vertex $v$;
\item the string $11$ is not contained in $a_0a_1 \cdots a_k$.
\item $k>n$ and $a_k =0$,
\item the terminal state of edge $e_k$ is visited earlier as the initial state of some $e_j$, $0 \le j < k$.
\end{enumerate}
 It follows from (1)-(4) that 
$a_0a_1 \cdots a_{j-1} (a_j a_{j+1} \cdots a_k)^\infty \in \mathcal{P}^c.$
But this path does not contain $11$, so it is in $\mathcal{P}$, a contradiction.
Thus $\mathcal{P}^c$ cannot be a path set.

\begin{figure} 
	\centering
	\psset{unit=1pt}
	\begin{pspicture}(-80,0)(80,150)
		\newcommand{\noden}[2]{\node{#1}{#2}{n}}
		\noden{$v_0$}{-30,20}
		\noden{$v_1$}{30,20}
		\dline{n$v_0$}{n$v_1$}{1}{0}
		\bcircle{n$v_0$}{90}{0}
	\end{pspicture}
	\caption{The path set of $(\mathcal{G},v_0)$ has  complement
	that is not a path set} 
		\label{fig3}
\end{figure}

\end{exmp}

%

\section{Presentations of Path Sets}\label{sec3}

 A given path  set $\mathcal{P}$ arises from many different pointed graphs.
 If we let $\mathcal{G}'$ be the labeled graph obtained from $\mathcal{G}$ by keeping only 
 the connected component containing the vertex $v$ and iteratively throwing out any stranded states 
 (those vertices with no exit edges)
 and the edges into or out of them (except $v$), then clearly $X_\mathcal{G}(v) = X_{\mathcal{G}'}(v)$.
 Therefore, from a symbol space point of view, it suffices to consider graphs $\mathcal{G}$ such that each vertex of $\mathcal{G}$ is reachable from $v$ by a path in 
  $\mathcal{G}$, and no vertices have out-degree zero. We call such labelled graphs {\em pruned graphs}.
  (Compare \cite[Prop. 2.2.10]{LM95}).

Suppose $\mathcal{P}$ is a path set with presentation $(\mathcal{G},v)$. Given an initial string 
$a_0a_1a_2\cdots$ belonging to a member of $\mathcal{P}$, 
it would be useful to know that this string corresponds to a unique finite walk
 in $\mathcal{G}$ originating at 
the distinguished vertex $v$. 
This is not true for a general $\mathcal{G}$, but is true for a special class of graphs. 


\begin{defn}\label{de31}
A finite labeled directed graph $\mathcal{G}$ is called \emph{right-resolving} (or {\em deterministic})
if for any vertex $w$ of $\mathcal{G}$, 
there is at most one edge originating at $w$ with any given label. A path set with a right-resolving presentation 
has the uniqueness property given above.
\end{defn} 

Recall that a presentation $(\sG, v)$ is termed {\em reachable} if each vertex of $\sG$ can be reached from $v$.

\begin{thm} 
\label{rrthm} 
Every path set $\mathcal{P}$ has a right-resolving presentation that is reachable.
\end{thm}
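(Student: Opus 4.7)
The plan is to carry out a subset (a.k.a.\ powerset) construction analogous to the determinization of a nondeterministic finite automaton in automata theory, then verify that an infinite path in the determinized graph lifts back to an infinite path in the original graph via König's Lemma.

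First I would fix an arbitrary presentation $(\mathcal{G},v_0)$ of $\mathcal{P}$ and prune it, removing any vertex not reachable from $v_0$ and iteratively removing any stranded vertex (one with no outgoing edge) together with its incident edges, so that every vertex is reachable from $v_0$ and has out-degree at least one. This does not change the associated path set. Next I would build a new labeled graph $\mathcal{H}$ whose vertex set consists of certain nonempty subsets of the vertex set of $\mathcal{G}$, defined inductively: declare $\{v_0\}$ to be a vertex, and whenever $S$ is a vertex of $\mathcal{H}$ and $\ell\in\mathcal{A}$, set
\[
\delta(S,\ell) := \{\,w \in V(\mathcal{G}) : \text{some } u\in S \text{ has an edge to } w \text{ labeled } \ell\,\},
\]
adding $\delta(S,\ell)$ as a vertex of $\mathcal{H}$ whenever it is nonempty and adding an edge from $S$ to $\delta(S,\ell)$ labeled $\ell$. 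Distinguish the vertex $w_0 := \{v_0\}$. Because $V(\mathcal{G})$ is finite, $\mathcal{H}$ has only finitely many vertices; by construction it is right-resolving (at most one outgoing edge of $\mathcal{H}$ per label), reachable from $w_0$, and pruned (every nonempty $S$ contains some $u \in S$ with some outgoing label in $\mathcal{G}$ since $\mathcal{G}$ is pruned).

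It then remains to verify $X_{\mathcal{H}}(w_0) = X_{\mathcal{G}}(v_0)$. The inclusion $\subseteq$ in the opposite direction, i.e.\ $X_{\mathcal{G}}(v_0) \subseteq X_{\mathcal{H}}(w_0)$, is a routine induction: any infinite walk $v_0 \to u_1 \to u_2 \to \cdots$ in $\mathcal{G}$ with labels $\ell_0\ell_1\cdots$ gives $u_n \in \delta(\cdots\delta(\delta(\{v_0\},\ell_0),\ell_1)\cdots,\ell_{n-1})$, so the corresponding sequence of subsets is a valid walk in $\mathcal{H}$. The reverse inclusion is the main obstacle and is where I would use a compactness argument. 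Given an infinite walk $S_0 = w_0, S_1, S_2, \ldots$ in $\mathcal{H}$ with label sequence $\ell_0\ell_1\cdots$, form the tree $T$ whose nodes at level $n$ are the finite walks $v_0 \to u_1 \to \cdots \to u_n$ in $\mathcal{G}$ with labels $\ell_0\cdots\ell_{n-1}$ and with $u_n \in S_n$, ordered by extension. Each level is nonempty (by definition of $S_n$ via the subset construction, every element of $S_n$ is reached by such a walk), and $T$ is finitely branching because each vertex of $\mathcal{G}$ has finite out-degree. König's Lemma then produces an infinite branch of $T$, which is the desired infinite walk in $\mathcal{G}$ emanating from $v_0$ with label sequence $\ell_0\ell_1\cdots$.

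The hard part is genuinely just the König's Lemma step — everything else is bookkeeping about the subset construction. The essential point is that the subset construction records exactly the set of possible ``current states'' in $\mathcal{G}$ after reading an initial word, so nonemptiness of $S_n$ for all $n$ is equivalent to the existence of arbitrarily long finite walks extending a fixed prefix, and finite branching then upgrades ``arbitrarily long'' to ``infinite.''
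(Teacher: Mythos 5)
Your proposal is correct, and its core is the same as the paper's: prune the given presentation, apply the subset (powerset) construction with distinguished vertex $\{v_0\}$, and verify that the resulting right-resolving pointed graph presents the same path set. Where you differ is in the verification of the nontrivial inclusion $X_{\mathcal{H}}(w_0)\subseteq X_{\mathcal{G}}(v_0)$. The paper proves a stronger Claim, namely $X_{\mathcal{H}}(S)=\bigcup_{v'\in S}X_{\mathcal{G}}(v')$ for \emph{every} subset-vertex $S$, by induction on the length of initial blocks, using unique backtracking through the $S_j$'s together with the pruned-graph hypothesis that every finite path of $\mathcal{G}$ extends to an infinite one; this stronger statement is then reused (taking $S=V(\mathcal{G})$) to conclude immediately after the theorem that every one-sided sofic shift is a path set. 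You instead lift an infinite walk in $\mathcal{H}$ directly to an infinite walk in $\mathcal{G}$ via K\"onig's Lemma applied to the finitely branching tree of finite $\mathcal{G}$-walks from $v_0$ carrying the prescribed labels and ending in $S_n$; the key observation that $S_n$ is exactly the set of endpoints of such walks is the routine induction you cite. Your compactness argument is arguably cleaner and does not need the extendability property of pruned graphs for that direction (nor any backtracking), but it only establishes the equality at $\{v_0\}$; if you wanted the paper's subsequent remark about sofic shifts, you would either rerun your argument with initial set $V(\mathcal{G})$ (which it supports verbatim) or state the claim for general $S$ as the paper does. Both proofs are complete and correct.
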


\begin{proof}  It suffices to construct a  right-resolving presentation, since the reachability condition
is then achieved by discarding  all vertices of $\sG$ that are not reachable from $v$.

The existence of a right-resolving presentation $\sH$ for (two-sided) sofic shifts,
constructed directly from a given presentation $\sG$,  is shown 
   in Lind and Marcus (\cite{LM95}, Theorem 3.3.2, p. 76), using
   the  subset construction described below. In the following Claim,
 we show this  construction yields the desired result for path sets, as well.

The {\em subset construction} starting from a labelled graph $\sG$
produces a new labelled  graph $\sH$  having $2^{|V(\sG)|}-1$ vertices which are  marked 
by  the nonempty subsets $S$ of the vertex labels of $\sG$.  
For each edge label $a \in \sA$ and a vertex $S$ of $\sH$
we assign an exit edge with this label mapping to the vertex $S'$ whose marking is the union of all 
vertices $w \in V(\sG)$ such that there is an edge from some $v \in S$ to $w$ with label $a$,
unless $S'=\emptyset$, in which case we  assign no edge.
The  labelled graph $\sH$ is clearly right-resolving. 

Now let  $(\sG, v_0)$ be  a presentation of a path set $\sP$.
We may assume without loss of generality that this presentation is a pruned graph,
viewed from vertex $v_0$.

{\bf  Claim.}  {\em The right-resolving construction applied
to the underlying graph $\sG$ of a pruned pointed graph $(\sG, v_0)$ has the property that each  vertex $S \in V(\sH)$ has path set
\begin{equation}  
X_{\sH}(S) =  \bigcup_{v' \in S} X_{\sG}(v').
\end{equation}
}

To prove the claim, let $\sB_m(\sP)$ denote the set of vectors giving in order  the first $m$ symbols of each path in $\sP$.
It then suffices to show the equality  of 
the sets $\sB_m ( X_{\sH}(S))$
and $\cup_{v' \in S} \sB_m(X_{\sG}(v'))),$ 
for all $m \ge 1$.
We prove the inclusions of these two sets  in both directions by induction on $m$.
The base case $m=1$ is clear, using the fact for $\sG$ that all one-step paths
extend to infinite paths.
For the inclusion of $\cup_{v' \in S} \sB_m(X_{\sG}(v'))$ in $\sB_m ( X_{\sH}(S))$
given $w \in S$ it suffices to note that a path in $\sG$ starting from a vertex $w$, with a
given symbol  set lifts to a path in $\sH$ starting from $S_0$  and the same symbol sequence,
by definition of the edges of $\sH$.
 For the other inclusion, given  a start vertex $S= S_0$, and proceeding to follow a path to vertices $S_1, S_2, ... , S_n$
in $\sH$, and given the symbol sequence, the right-resolving property gives us unique
backtracking from $S_n$ to $S_0$. 
A key point is that, although we may not  know what state of $\sG$ in $S_n$ 
we have reached  at the $n$-th step,
from knowledge of the $(n+1)-$st step we are guaranteed there exists some
 state of $S_n$ giving this symbol that permits backtracking 
through a series of states $w_j \in S_j$ to a state $w_0$ in $S_0$, since all states of $S_n$ permit such backtracking. Now the sequence corresponds to a
path in $\sG$ starting from $w_0$ that traverses the states $w_j$ with the correct letters. It corresponds
to the initial part of an infinite path in $X_{\sG}(w_0)$, because by our hypothesis on $\sG$ all finite paths
extend to some infinite path. Thus it belongs to  $\cup_{v' \in S} \sB_m(X_{\sG}(v')))$.
This completes the induction step, proving the claim.\smallskip

We apply the Claim  to the pruned labelled graph $(\sG, v_0)$ presenting $\sP$.  The
Claim shows that the  pointed graph $(\sH, S_{v_0})$ 
with vertex $S_{v_0}:= \{ v_0\}$ is a
right-resolving presentation of $\mathcal{P}$, which is the desired result.
Note that we may  obtain a (possibly smaller) right resolving presentation of $\sP$, by taking the 
induced subgraph of $\sH$ obtained by restricting to the set of vertices reachable from $S_{v_0}$
by a directed path. \end{proof}

The proof of Theorem~\ref{rrthm}  shows that any one-sided sofic shift $Y$ is a path set.
For if  $\sG$ represents such a shift, then
the Claim above applied to $\sG$ implies  that the pointed graph $(\sH, S_{\sG})$
 with  $S_{\sG} := V(\sG)$ represents the   path set $X_{\sH}(S_{\sG})$,
which is the one-sided sofic shift represented by the  graph $\sG$.  
We illustrate this on the following example.

\begin{exmp}\label{ex33}
Consider the three-state graph $\sG$,
pictured in Figure \ref{fig4}, which uses the label alphabet $\sA=\{ 0, 1, 2\}$.
Here $\sG$ is  a presentation of a one-sided sofic shift $\sS$, 
but this presentation is  not right-resolving.
The subset construction results in the graph $\sH$ pictured in Figure \ref{fig5}. The graph 
$\sH$
has $7$ vertices, labelled by unions of states in $\sG$, and which is 
right-resolving.  The pointed graph $(\sH, ABC)$ gives 
a path set presentation of the sofic shift $\sS$. In this example the vertices labelled
$A, B, C$ can be pruned from this pointed graph, resulting in a $4$-state 
graph $\sH'$ such that the pointed graph $(\sH', ABC)$ gives also a path set representation
of the one-sided sofic shift $\sS$.
\end{exmp}

\begin{figure} 
	\centering
	\psset{unit=1pt}
	\begin{pspicture}(-80,0)(80,150)
		\newcommand{\noden}[2]{\node{#1}{#2}{n}}
		\noden{A}{0,100}
		\noden{B}{-50,15}
		\noden{C}{50,15}
		\dline{nA}{nC}{0}{2}
		\dline{nA}{nB}{0}{1}
		\dline{nB}{nC}{1}{2}
	\end{pspicture}
	\caption{A graph $\sG$ that is not right-resolving }\label{fig4}
\end{figure}

\begin{figure} 
	\centering
	\psset{unit=1pt}
	\begin{pspicture}(-100,0)(100,170)
		\newcommand{\noden}[2]{\node{#1}{#2}{n}}
		\noden{ABC}{0,100}
		\noden{AB}{-80,150}
		\noden{AC}{80,150}
		\noden{BC}{0,0}
		\noden{A}{-60,0}
		\noden{B}{100,70}
		\noden{C}{-100,70}
		\aline{nABC}{nAC}{1}
		\aline{nABC}{nBC}{0}
		\aline{nABC}{nAB}{2}
		\aline{nC}{nAB}{2}
		\aline{nA}{nBC}{0}
		\aline{nB}{nAC}{1}
		\dline{nAC}{nAB}{2}{1}
		\dline{nAC}{nBC}{0}{1}
		\dline{nAB}{nBC}{0}{2}
	\end{pspicture}
	\caption{Right-resolving graph $\sH$ obtained from $\sG$ by subset construction}\label{fig5}
\end{figure}

Returning to  the subset  construction, the number of vertices  
$|V(\sH)|=2^{|V(\sG)|} -1$,
which is exponentially larger than $|V(\sG)|$.
The following example shows this exponential blowup is  sometimes unavoidable 
using the subset construction to construct a path set realization of a sofic shift $Y$,
even when $\sG$ is a minimal right-resolving representation of $Y$. 
More is true: in this example the pointed 
graph $(\sH, V(\sG))$ produced by 
the subset construction can be shown to give  a minimal right-resolving path set presentation of $Y$,
see Appendix A.
Such a  result establishes that  
minimal right-resolving realizations of a one-sided sofic shift $\sS$ 
by a graph $\sG$ can sometimes be exponentially more efficient
than minimal right-resolving presentations as a path set.


\begin{exmp}\label{ex34}
Let $\mathcal{G}:=\sG_n$ have vertex set $V = \{v_0,\ldots,v_{n-1}\}$,
and have  edges  labelled 
from the  label set $\sL=\{ 0, 1, ..., 2n-1\}$, as follows. Each vertex $v_i$ has $n-1$ self-loops,
labelled with $n-1$ of the symbols from $(n, n+1, ..., 2n-1)$, omitting only the symbol $n+i$.
Between distinct $v_i$ and $v_j$ there is a directed edge labelled $ j-i ~(\bmod \, n)$, taking the
least nonnegative residue,  noting that  this label  necessarily falls in 
the interval $[1, n-1]$.  Figure ~\ref{fig6} pictures $\mathcal{G}_3$.
The  graph  $\sG$   
 is right resolving and irreducible (i.e. strongly connected). 
and has has $n(2n-2)$ directed edges.
 It is therefore an irreducible 
 right-resolving presentation of a one-sided sofic shift $Y= X(\sG)$.

 Applying the  subset construction  to $\sG$  yields a pointed graph $(\sH, S_{\sG})$
 giving a right-resolving presentation of $Y= X_{\sH}(S_{\sG})$ as a path set. The graph $\sH$
 has $2^n -1$ vertices.  We can check that the  vertex $S_{\sG}$ of $\sH$ has directed
 paths connecting it to every one of the vertices of $\sH$. 
 Namely, for $1 \le j \le n$ there is a directed  edge labelled
 $n+j$ connecting it to the vertex labelled $S_{\sG'}$ with $\sG' :=\sG \smallsetminus \{ v_j\}$.
 In a similar fashion for $i \ne j$  there is a directed edge labelled $n+i$ connecting  the vertex $S_{\sG'}$ to 
 $S_{\sG''}$ with $\sG'' :=\sG' \smallsetminus \{ v_i\}= \sG \smallsetminus \{ i, j \}$, and so on.
 Furthermore all the vertices $S_{v_i}$ for $1 \le i \le n$ form a strongly connected 
 component of this graph, reproducing the graph $\sG$, and this graph  can be reached  by a path
 from every other vertex in $\sH$. Thus there are no stranded states, so $(\sH, S_{\sG})$
 is a pruned pointed graph having $2^{n} -1$ vertices.
 The graph $\sH$ actually provides a minimal right resolving presentation of $Y$ as a
 path set, which we establish in  Appendix A.
  \end{exmp}

\begin{figure} 
	\centering
	\psset{unit=1pt}
	\begin{pspicture}(-100,0)(100,170)
		\newcommand{\noden}[2]{\node{#1}{#2}{n}}
		\noden{$v_0$}{0,110}
		\noden{$v_1$}{-50,40}
		\noden{$v_2$}{50,40}
		\dline{n$v_0$}{n$v_1$}{1}{2}
		\dline{n$v_0$}{n$v_2$}{2}{1}
		\dline{n$v_1$}{n$v_2$}{1}{2}
		\bcircle{n$v_0$}{0}{4,5}
		\bcircle{n$v_1$}{120}{3,5}
		\bcircle{n$v_2$}{240}{3,4}
	\end{pspicture}
	\caption{Sofic-shift presentation $\sG_3$ that exhibits exponential blow-up in its minimal right-resolving path set presentation
	$(\sH, V(\sG_3))$}\label{fig6}
\end{figure}

%
%

\section{Set-Theoretic  Properties of Path Sets}\label{sec3b}

We  establish that  the collection of path sets are closed  under union and intersection, obtaining Theorem \ref{topthm}.
This collection of sets is not closed under complement by Example \ref{ex23}.
To establish closure of the collection of path sets under intersections,
we  will need an appropriate notion of graph product. Lind and Marcus \cite{LM95} make the following construction:

\begin{defn}\label{de32}
 Let $\mathcal{G}_1 = (V_1,\mathcal{E}_1)$ and $\mathcal{G}_2=(V_2,\mathcal{E}_2)$ be directed
 labeled graphs 
 over the same alphabet $\mathcal{A}$. The \emph{label product}  of these graphs, written 
 \[
 \mathcal{G}_1 \star \mathcal{G}_2: = ( {V}_1 \times {V}_2, \mathcal{E})
 \]
 is the labeled directed graph with vertex set 
 ${V}_1 \times {V}_2$ and edge set 
 $$
 \mathcal{E}:= \{((e_1,e_2),\ell) \in \mathcal{E}_1 \times \mathcal{E}_2 \, | \,\text{$e_1$ and $e_2$ share the} \newline
 \text{ same label} ~\ell \in \sA\}.
 $$
  The label assigned
 to  the edge $(e_1,e_2)$ is the common label of the edges $e_1 \l$ and $e_2$.
\end{defn}

We  adapt this definition 
to get a \emph{pointed label product} for our pointed graphs 
$(\mathcal{G}_1,v_1)$ and $(\mathcal{G}_2,v_2)$ by  choosing the pair $(v_1,v_2)$ as the distinguished vertex  of 
$(\mathcal{G}_1 \star \mathcal{G}_2, (v_1, v_2))$, i.e., 
\[
(\mathcal{G}_1,v_1) \star (\mathcal{G}_2,v_2) := (\mathcal{G}_1 \star \mathcal{G}_2,(v_1,v_2)).
\]

 It is then possible to eliminate extraneous components and prune stranded states from the pointed label product 
to recover an equivalent pointed graph $(\mathcal{G},v)$ that is irreducible and such that each vertex of $\mathcal{G}$ 
is reachable from the distinguished vertex.


\begin{lem} If $\mathcal{G}_1$ and $\mathcal{G}_2$ are  labeled directed graphs
that are right-resolving, then the  label product graph   $\sG := \mathcal{G}_1 \star \mathcal{G}_2$ is also right-resolving.
\end{lem}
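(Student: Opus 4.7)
The plan is straightforward: unwind the definitions of the label product and right-resolving, and show that two edges from a common vertex of $\mathcal{G}_1 \star \mathcal{G}_2$ carrying the same label must coincide.

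First I would fix a vertex $(v_1,v_2)$ of $\mathcal{G}_1 \star \mathcal{G}_2$ and a label $\ell \in \mathcal{A}$, and consider any two edges $((e_1,e_2),\ell)$ and $((e_1',e_2'),\ell)$ of $\mathcal{G}_1 \star \mathcal{G}_2$ originating at $(v_1,v_2)$. By the construction of the label product, the first coordinate $e_1$ is an edge of $\mathcal{G}_1$ originating at $v_1$ carrying label $\ell$, and likewise for $e_1'$; and similarly for $e_2, e_2'$ in $\mathcal{G}_2$ relative to $v_2$.

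Next I would apply the right-resolving hypothesis to each factor separately: since $\mathcal{G}_1$ is right-resolving, there is at most one edge from $v_1$ in $\mathcal{G}_1$ with label $\ell$, forcing $e_1 = e_1'$; the same argument applied to $\mathcal{G}_2$ forces $e_2 = e_2'$. Hence the two edges agree, which proves that $\mathcal{G}_1 \star \mathcal{G}_2$ is right-resolving.

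There is no real obstacle here; the only minor care is to verify that the labeling convention in Definition~\ref{de32} indeed assigns the single common label $\ell$ to the product edge, so that the right-resolving condition at $(v_1,v_2)$ reduces coordinatewise to the right-resolving conditions at $v_1$ and $v_2$. Once that is observed, the proof is essentially a one-line coordinatewise argument.
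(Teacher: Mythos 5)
Your proof is correct: the coordinatewise argument (two edges from $(v_1,v_2)$ with the same label $\ell$ have first coordinates that are edges from $v_1$ labeled $\ell$, hence equal by right-resolving of $\mathcal{G}_1$, and likewise in the second coordinate) is exactly the standard verification. The paper does not spell this out at all --- it simply cites Lind and Marcus, Proposition 3.4.10 --- so your write-up just makes explicit the same routine argument that the cited reference contains.
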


\begin{proof} 
This is shown in Lind and Marcus (\cite{LM95}, Proposition 3.4.10, p. 89).
\end{proof}

This result applies to the pointed label product, since the right-resolving property  is determined by the underlying labelled graph.
Just as the label product interacts nicely with the structure of sofic shifts (see Lind and Marcus \cite[Prop. 3.4.10, p. 89]{LM95} )
we have the following result  concerning the pointed label product of pointed graphs and their associated path sets.

\begin{prop} \label{prodprop}
 If $(\mathcal{G}_1,v_1)$ and $(\mathcal{G}_2,v_2)$ are pointed graphs over the same alphabet $\mathcal{A}$, then
\begin{equation} 
X_{\mathcal{G}_1}(v_1) \cap X_{\mathcal{G}_2}(v_2) = X_{\mathcal{G}_1 \star \mathcal{G}_2}((v_1,v_2)).
\end{equation}
\end{prop}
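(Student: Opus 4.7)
The plan is to prove the two set-theoretic inclusions separately, using the definition of the label product to translate walks in one setting to walks in the other. The underlying idea is that an edge in $\mathcal{G}_1 \star \mathcal{G}_2$ is by construction nothing more than a pair of like-labeled edges, one from each factor, so an infinite walk in the product is the same data as a pair of infinite walks sharing a label sequence.

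For the inclusion $X_{\mathcal{G}_1}(v_1) \cap X_{\mathcal{G}_2}(v_2) \subseteq X_{\mathcal{G}_1 \star \mathcal{G}_2}((v_1,v_2))$, I would start with $x = a_0 a_1 a_2 \cdots$ lying in both path sets. By definition there exist infinite walks $e_0^{(1)} e_1^{(1)} \cdots$ in $\mathcal{G}_1$ starting at $v_1$ and $e_0^{(2)} e_1^{(2)} \cdots$ in $\mathcal{G}_2$ starting at $v_2$, each realizing the label sequence $x$. Since at each step $e_i^{(1)}$ and $e_i^{(2)}$ share the common label $a_i$, the pair $(e_i^{(1)}, e_i^{(2)})$ is by definition an edge of $\mathcal{G}_1 \star \mathcal{G}_2$ with label $a_i$. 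Checking that consecutive such pairs concatenate (the terminal vertex of $(e_i^{(1)}, e_i^{(2)})$ is $(\mathrm{end}(e_i^{(1)}), \mathrm{end}(e_i^{(2)}))$, which equals the initial vertex of $(e_{i+1}^{(1)}, e_{i+1}^{(2)})$) produces an infinite walk in the product, originating at $(v_1, v_2)$, with label sequence $x$. Thus $x \in X_{\mathcal{G}_1 \star \mathcal{G}_2}((v_1, v_2))$.

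For the reverse inclusion, I would take an infinite walk $\epsilon_0 \epsilon_1 \cdots$ in $\mathcal{G}_1 \star \mathcal{G}_2$ issuing from $(v_1, v_2)$, and write each edge as $\epsilon_i = (e_i^{(1)}, e_i^{(2)})$ with common label $a_i$. The coordinate projections yield edge sequences $e_0^{(1)} e_1^{(1)} \cdots$ in $\mathcal{G}_1$ and $e_0^{(2)} e_1^{(2)} \cdots$ in $\mathcal{G}_2$; the fact that these are themselves walks (with compatible source/target vertices) follows because the product's vertex set is the Cartesian product and concatenation of edges in the product forces concatenation in each coordinate. Since the starting vertex of $\epsilon_0$ is $(v_1, v_2)$, these projected walks begin at $v_1$ and $v_2$ respectively, and both realize the label sequence $x = a_0 a_1 a_2 \cdots$. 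Hence $x$ lies in both $X_{\mathcal{G}_1}(v_1)$ and $X_{\mathcal{G}_2}(v_2)$.

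There is no real obstacle here; the proof is essentially a careful unpacking of definitions. The only thing that needs care is confirming that the source and target vertex structure on the product graph is indeed the coordinatewise one, so that walks in the product correspond bijectively to pairs of walks with matching label sequences. Once this is noted, the two inclusions follow immediately and Proposition \ref{prodprop} is established.
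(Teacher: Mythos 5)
Your proof is correct and follows essentially the same route as the paper: both inclusions are obtained by the observation that an infinite walk in $\mathcal{G}_1 \star \mathcal{G}_2$ from $(v_1,v_2)$ is exactly a pair of like-labeled infinite walks from $v_1$ and $v_2$, projecting for one direction and pairing for the other. Your added remark about checking the coordinatewise source/target structure is the same definitional unpacking the paper performs implicitly, so there is nothing to correct.
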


\begin{proof} Suppose
 $(\mathcal{G}_1,v_1)$ and $(\mathcal{G}_2,v_2)$ are pointed graphs. Let 
 $\mathcal{G} = \mathcal{G}_1 \star \mathcal{G}_2$ be the label product, and let $v=(v_1,v_2)$, a node of $\mathcal{G}$. 
 Then $(\mathcal{G},v)$ is also a pointed graph. 
 
 We prove now that $X_\mathcal{G}(v) = X_{\mathcal{G}_1}(v_1) \cap X_{\mathcal{G}_2}(v_2)$. 
Suppose 
$$(e_{11},e_{21}),(e_{12},e_{22}),(e_{13},e_{23}),\ldots$$
 is an infinite walk in 
$\mathcal{G}$ originating at $v$. Then by the definition of the label product $e_{11},e_{12},e_{13},\ldots$ is an infinite walk in $\mathcal{G}_1$ originating at $v_1$ and $e_{21},e_{22},e_{23},\ldots$ is an infinite walk in 
$\mathcal{G}_2$ originating at $v_2$. The label of $(e_{11},e_{21}),(e_{12},e_{22}),(e_{13},e_{23}),\ldots$
 is equal to the label of $e_{11},e_{12},e_{13},\ldots$ and of $e_{21},e_{22},e_{23},\ldots$ by definition, so 
\[X_\mathcal{G}(v) \subseteq X_{\mathcal{G}_1}(v_1) \cap X_{\mathcal{G}_2}(v_2).
\]
Conversely, suppose $a_1a_2a_3 \ldots \in X_{\mathcal{G}_1}(v_1) \cap X_{\mathcal{G}_2}(v_2)$. 
Then there is an infinite edge walk $e_{11},e_{12},e_{13},\ldots$ in $\mathcal{G}_1$ originating at $v_1$ 
and an edge path $e_{21},e_{22},e_{23},\ldots$ in $\mathcal{G}_2$ originating at $v_2$ such that both of these paths have label $a_1a_2a_3 \ldots$. Since $e_{1i}$ and $e_{2i}$ share the same label for all $i \in \mathbb{N}$, there is path $(e_{11},e_{21}),(e_{12},e_{22}),(e_{13},e_{23}),\ldots$ in $\mathcal{G}$ originating at $v=(v_1,v_2)$ which also has label $a_1a_2a_3 \ldots$. Thus, we also have
\[X_{\mathcal{G}_1}(v_1) \cap X_{\mathcal{G}_2}(v_2) \subseteq X_\mathcal{G},
\]
and the proposition follows.
\end{proof}


\begin{proof}[Proof of Theorem~\ref{topthm}.]  
Let $\mathcal{P}$ be a path set in $\sC(\sA)$.
\newline (1) It is immediate from the definition that $\mathcal{P}$ is a closed subset of $\mathcal{A}^\mathbb{N}$.
\newline (2) If follows immediately from Proposition ~\ref{prodprop} that the collection of path sets in $\mathcal{A}^\mathbb{N}$ is closed under intersections.
\newline (3) Suppose $\mathcal{P}_1$ and $\mathcal{P}_2$ are path sets in $\mathcal{A}^\mathbb{N}$.
 We must show that $\mathcal{P}_1 \cup \mathcal{P}_2$ is a path set. Choose presentations
  $(\mathcal{G}_1,v_1)$ and $(\mathcal{G}_2,v_2)$, and construct $(\mathcal{G},v)$ as follows: 
  Let $\mathcal{G}$ be obtained from $\mathcal{G}_1 \amalg \mathcal{G}_2$ by adding a new vertex $v$, 
  and, for each edge $v_i \rightarrow w$ in $\mathcal{G}_i$ labeled $a$, 
  an edge $v \rightarrow w$ in $\mathcal{G}$ labeled $a$. 
  We now must show that $X_\mathcal{G}(v) = \mathcal{P}_1 \cup \mathcal{P}_2$.

Indeed, suppose $a_0a_1a_2 \cdots \in \mathcal{P}_1 \cup \mathcal{P}_2$. 
Then without loss of generality $a_0a_1a_2 \cdots \in \mathcal{P}_1$. 
Let $v_1 \rightarrow w$ be the first edge in an infinite walk in $\mathcal{G}_1$ with edge-label 
$a_0a_1a_2 \cdots$. Then by assumption there is an edge $v \rightarrow w$ labeled $a_0$, 
and replacing the edge walk $v_1 \rightarrow w$ with $v \rightarrow w$ gives an infinite walk 
in $\mathcal{G}$ originating at $v$ with edge-label $a_0a_1a_2\cdots$. 
Thus, $\mathcal{P}_1 \cup \mathcal{P}_2 \subset X_\mathcal{G}(v)$.

Conversely, suppose $a_0a_1a_2\cdots \in X_\mathcal{G}(v)$. 
Let $v \rightarrow w \rightarrow \cdots$ be an infinite walk in $\mathcal{G}$ with edge label 
$a_0a_1a_2 \cdots$. By construction $\mathcal{G}$ has no self-loops at $v$, 
so without loss of generality $w \in \mathcal{P}_1$, and in fact there is an edge
 $v_1 \rightarrow w$ labeled $a_0$. But there are no edges connecting the $\mathcal{G}_1$
  component of $\mathcal{G}$ to the $\mathcal{G}_2$ component of $\mathcal{G}$, 
  so it must be that the remaining edges of our walk remain in $\mathcal{G}_1$. 
  Thus, replacing $v \rightarrow w$ by $v_1 \rightarrow w$ gives an infinite walk in $\mathcal{G}_1$
   originating at $v_1$ with edge-label $a_0a_1a_2 \cdots$. 
   Therefore $X_\mathcal{G}(v) \subset \mathcal{P}_1 \cup \mathcal{P}_2$, 
   hence $X_\mathcal{G}(v) = \mathcal{P}_1 \cup \mathcal{P}_2$, 
   and we conclude that $\mathcal{P}_1 \cup \mathcal{P}_2$ is a path set. This proves the theorem.
\end{proof}

%
%

\section{Path Sets and  One-Sided Sofic Shifts}\label{sec4}


\begin{proof}[Proof of Theorem~\ref{softhm}] 
(1) Let  the path set $\sP= X_{\sG}(v_0)$. The  shifted set $\sigma(\sP)$ is a finite union of 
the path sets $X_{\sG}(v_i)$, where 
$v_i$ runs over the vertices reachable from $v_0$ in one step. Thus $\sigma(\sP)$ is a path set
using closure of  path sets under unions (Theorem \ref{topthm}(3)).

\noindent (2) Let $\mathcal{P}$ be a path set, 
and let $\overline{\mathcal{P}}$ be its one-sided shift closure. Suppose $\mathcal{G}(v)$ is a 
presentation of $\mathcal{P}$. By the discussion in the introduction we can assume that every vertex of 
$\mathcal{G}$ is reachable in $\mathcal{G}$ by a path originating at $v$. Then
\begin{equation} \sigma(\mathcal{P}) = \cup_{\{w \in \mathcal{G} : \exists \text{ an edge } v \rightarrow w\}} X_\mathcal{G}(w),
\end{equation}
and hence 
\begin{equation} \label{closureeq} 
\overline{\mathcal{P}} = \cup_{j \in \mathbb{N}} \sigma^j (\mathcal{P}) = \cup_{w \in \mathcal{G}} X_\mathcal{G}(w).
\end{equation}
Since $\mathcal{G}$ is a finite graph, this says that $\overline{\mathcal{P}}$ is a finite union of path sets, hence is a path set by Theorem ~\ref{topthm}. 
 \medskip

\noindent (3) For a path set $\sP$ represented by a pruned pointed graph $(\sG, v_0)$, 
  its shift-closure $\overline{\mathcal{P}}$ is exactly the one-sided sofic shift presented by $\mathcal{G}$,
 since every vertex of $\sG$ is reachable from $v_0$, and $\overline{\mathcal{P}}$ is a path set by (1).
 Thus if $\mathcal{P}$ is shift-invariant, i.e. $\mathcal{P} = \overline{\mathcal{P}}$, 
then $\mathcal{P}$ is a one-sided sofic shift.

For the converse, 
suppose $Y \subset \mathcal{A}^\mathbb{N}$ is a one-sided sofic shift. 
Then $Y$ is the union of the path sets at all the vertices of $\sG$. But the collection of path sets
is closed under set union by Theorem \ref{topthm} (3) 
whence $Y$ is a path set. (An alternative proof is obtainable by direct construction via the subset construction,
as indicated  after Theorem \ref{rrthm}. )
\end{proof}

%
%

\section{Path Sets and Two-Sided Sofic Shifts}  
\label{sec4a}

We  relate path sets to the infinite follower sets defined in Section \ref{sec12} for two-sided sofic shifts.
Recall that  for a two-sided sofic shift $Y$ and a finite word $\bw:=\alpha_{-k}\alpha_{-k+1}\cdots   \alpha_{-2}\alpha_{-1}$,
the   {\em infinite follower set} $\sF_{Y}(\bw) \subset \sA^{\mathbb{N}}$ 
is  the set of all possible one-sided (infinite) paths $\alpha_0\alpha_1 \alpha_2...$ in $Y$ that can directly follow 
occurrences of the word $\bw$ in the  sofic shift.

\begin{thm}\label{th61} {\em (Path sets and infinite follower sets)}

(1) Each infinite follower set $\sF_{Y}(\bw) \subset \sA^{\mathbb{N}}$ coincides with  a path set in the same alphabet $\sA$.

(2) Not all path sets $\sP \subset \sA^{\NN}$ are infinite follower sets in the same alphabet $\sA$. 
However any path set $\sP$  is representable as an infinite follower set using an enlarged alphabet $\sA' := \sA \cup \{ a'\}$ containing
 one extra symbol.
\end{thm}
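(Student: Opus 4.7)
The plan is to address the two parts of the theorem in turn.

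For part~(1), I would start by fixing a labeled directed graph $\sG$ presenting the two-sided sofic shift $Y$, which we may take to be essential in the sense that every vertex lies on some bi-infinite walk. For a finite word $\bw$ occurring in $Y$, let $V_{\bw} \subseteq V(\sG)$ denote the set of terminal vertices of walks in $\sG$ whose label spells $\bw$. Then every one-sided infinite continuation of $\bw$ in $Y$ is the label sequence of an infinite walk issuing from some $v \in V_{\bw}$, giving
\[
\sF_Y(\bw) = \bigcup_{v \in V_{\bw}} X_{\sG}(v).
\]
Each summand is a path set, and by closure of path sets under finite unions (Theorem~\ref{topthm}(3)), so is $\sF_Y(\bw)$, over the same alphabet $\sA$. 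This proves (1).

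For part~(2), I would first exhibit a path set over $\sA = \{0,1\}$ that cannot be written as an infinite follower set over the same alphabet. Let
\[
\sP := \{0^\infty\} \cup 01\{0,1\}^{\NN},
\]
which is plainly a path set, being the union of the two obvious path sets $\{0^\infty\}$ and $01\{0,1\}^{\NN}$. Suppose for contradiction that $\sP = \sF_Y(\bw)$ for some two-sided sofic shift $Y$ on $\sA$ and some finite word $\bw$. One computes $\{y : 0y \in \sP\} = \{0^\infty\} \cup 1\{0,1\}^{\NN}$, and any such $y$ is a continuation of $\bw 0$ in $Y$, so $\sF_Y(\bw 0) \supseteq 1\{0,1\}^{\NN}$. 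Consequently, for every finite word $u$ over $\sA$ the block $\bw 0 1 u$ appears as a factor of some bi-infinite sequence in $Y$, and hence so does $u$. Thus the language of $Y$ is all of $\sA^{\ast}$, forcing $Y = \sA^{\ZZ}$. But then $\sF_Y(\bw) = \sA^{\NN} \neq \sP$ (since, e.g., $1^\infty \in \sA^{\NN}\setminus \sP$), a contradiction.

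To realize an arbitrary path set as a follower set on one extra letter, fix a pruned presentation $(\sG, v_0)$ of $\sP$ over $\sA$ and build $\sG'$ by adjoining a new vertex $v^*$, a single directed edge $v^* \to v_0$ labeled with the new symbol $a'$, and a self-loop at $v^*$ labeled with any fixed $b \in \sA$. Let $Y'$ be the two-sided sofic shift on $\sA' = \sA \cup \{a'\}$ presented by $\sG'$. The label $a'$ is carried by the unique edge $v^* \to v_0$, and no edge of $\sG'$ leaves $V(\sG)$ for $v^*$, so every occurrence of $a'$ in a bi-infinite walk of $\sG'$ is followed immediately by a walk in $\sG$ starting at $v_0$, whose label sequence lies in $\sP$. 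The self-loop at $v^*$ produces bi-infinite walks of the form $b^{-\infty} a' s^{+\infty}$ for each $s \in \sP$, so $a'$ is indeed a word of $Y'$ and $\sF_{Y'}(a') = \sP$, as desired.

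The main obstacle is the nonexistence argument in the counterexample: one must use shift-invariance of $Y$ to upgrade the one-step consequence $\sF_Y(\bw 0) \supseteq 1\{0,1\}^{\NN}$ into the strong conclusion $\mathcal{L}(Y) = \sA^{\ast}$. The decisive point is that once $\sP$ is forced to arise as a right-extension inside a shift-invariant $Y$, the $1$-branch of $\sP$ creates an unconstrained tail at position $|\bw|+2$, and this freedom cascades through $Y$ to collapse it to the full two-sided shift. The construction in the second half of (2), by contrast, is almost by fiat: the new letter $a'$ serves as a marker that is impossible to reach except via the designated transition into $v_0$.
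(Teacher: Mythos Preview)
Your proof is correct, but differs from the paper's in two places.

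For part~(1), the paper invokes Fischer's characterization that a two-sided sofic shift has only finitely many infinite follower sets, and then observes that the right Krieger cover of $Y$ is, by construction, a labelled graph whose vertex path sets are exactly these follower sets. Your argument is more elementary and self-contained: you fix an arbitrary essential presentation $\sG$, write $\sF_Y(\bw)$ as the finite union $\bigcup_{v\in V_{\bw}} X_{\sG}(v)$, and apply closure under finite unions (Theorem~\ref{topthm}(3)). This avoids importing the Krieger cover machinery at the cost of not exhibiting a single pointed-graph presentation directly.

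For the counterexample in part~(2), the paper uses the simpler prefix set $\sZ_1 = 1\{0,1\}^{\NN}$ from Example~\ref{ex21} in place of your $\{0^\infty\}\cup 01\{0,1\}^{\NN}$. The logic is the same---the unconstrained tail forces every finite binary word to occur in $Y$, collapsing $Y$ to the full shift---but with $\sZ_1$ the free tail sits at depth one, so no preliminary passage to $\sF_Y(\bw 0)$ is needed. Your example works, it is just more elaborate than necessary. The construction in the second half of~(2) is essentially identical to the paper's.
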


\begin{proof}

(1)  Two-sided sofic shifts are characterized among all two-sided subshifts in $\sA^{\mathbb{Z}}$
by the property of  having a finite number of distinct infinite (right-) follower sets (Fischer \cite[Theorem 2]{Fis75b}).
Given a  two-sided 
sofic shift $Y \subset \sA^{\mathbb{Z}}$,  its {\em right Krieger cover} (\cite{Kr83}) is constructed by taking a labelled
graph $\sG$ whose states have  path sets corresponding to  all the possible  infinite follower sets $\sF_{Y}(\bw)$  of $Y$.
By construction the path sets in the right Krieger cover 
 coincide  with  these infinite follower sets. This shows that    all infinite follower sets
  are path sets in the same alphabet.

(2) We claim  that the prefix  sets $\sZ_j \subset \sA^{\mathbb{N}} $  with $\sA = \{0,1 ,..., n-1\}$ in Example \ref{ex21},
with $n \ge 2$, are path sets that are  {\em not} infinite follower sets in
any two-sided sofic shift $Y$ using the same alphabet $\sA$. For simplicity we show this for 
the prefix set  $\sZ_1= 1 \{0,1\}^{\mathbb{N}}$ 
pictured in Figure \ref{fig1}. 
In this alphabet any two-sided sofic system in 
which $\sZ_1$ is included must contain its  two-sided shift closure,
and  this is  the full two-sided shift $Y:=\{0, 1\}^{\mathbb{Z}}$. 
However any  infinite follower set $\sF_{Y}( \alpha_{-k-1} \alpha_{-k} \cdots \alpha_{-1})$
for the full shift $Y$  will itself be the full one-sided
shift $\{0,1\}^{\NN}$, and this never equals $\sZ_1$. 
That is,  the ``initial condition" data in $\sZ_1$ is lost when taking  the shift closure. Thus $\sZ_1$ is not 
an infinite follower set in any two-sided sofic system using the alphabet $\sA=\{0,1\}$.

It is easy to see that any path set $\sP \subset \sA$ can be represented as an infinite follower set in an 
enlarged alphabet $\sA'= \sA \cup \{ a'\}$ adding one extra symbol to the label set, call the extra symbol $n$. 
 The extra symbol $``n"$ will supply the ``initial condition" data determining the path set.
Take a presentation
$(\sG, v_0)$ for $\sP$, and create a new graph $\sG'$ which
consists of  the graph $\sG$  augmented with one extra vertex $v^{*}$, which has two edges, a self-loop
labelled with a symbol from $\sA$, call it $``0"$, and a directed  edge from $v^{*}$ to $v_0$ labelled $``n"$.
We let $Y$ denote the two-sided sofic shift associated to $\sG'$.  The  shift $Y$ certainly contains all two-sided infinite paths
of the form $0^{\infty}\, n \, \bs$, where $\bs \in \sP$.
Now $\bw= \alpha_{-1} = n.$ We assert that the infinite follower set $ \sF_{\sG'}( \bw)=\sP$. 
This is clear, since there is only one edge labelled $``n"$, which uniquely identifies the vertex $v_0$. 
\end{proof}

We  make a few remarks comparing  minimal presentations of these structures.

\begin{defn}\label{de41b}
 Call a   presentation of  either a two-sided sofic shift or a one-sided sofic shift  or  a path set   {\em minimal} 
 if it minimizes the number of vertices in the associated labelled graph $\sG$, over all presentations of
the same type. 
\end{defn} 

\noindent{\bf Remarks.}
(1) For two-sided sofic shifts, one can always find a presentation in which
the underlying labelled graph $\sG$ has a property called being ``trim"
(in the sense of finite automata).
A  labelled directed graph is {\em trim}
if  every finite path in it can be extended to some doubly infinite path.
(For related but not identical concepts of ``trim" see  Willems \cite[Sec. 1.4.5]{Wil89}
and  Eilenberg \cite[p. 23]{Eil74A}.) Minimal presentations of two-sided
sofic shifts are necessarily trim.  In contrast, some path sets
have no presentation $(\sG, v)$ in which $\sG$ is trim;
 the basic sets $\sZ_j$  provide an example, 
 as follows from the proof of  (2) above in Theorem \ref{th61}.

(2) There has been much study of minimal presentations of two-sided sofic shifts.
If a two-sided sofic shift is  transitive (i.e. it has a presentation whose graph is strongly connected)
 then all minimal presentations
are isomorphic (\cite[Theorem 4]{Fis75b}. 
For reducible two-sided sofic shifts it is known that there can be non-isomorphic minimal presentations. 
For irreducible two-sided sofic shifts $Y$
 the right Krieger cover always gives  a minimal right-resolving
 presentation as a two-sided sofic shift.\footnote{This fact can be shown by observing that by construction it 
 has the {\em follower-separated property} (i.e., every graph vertex has a different follower set);
 and that its graph is irreducible (strongly connected), which is a consequence of the
 irreducibility of $Y$. Then one can use the fact that  the infinite follower-separated property 
  implies the finite follower-separated property, to show minimality,
  by invoking  \cite[Corollary 3.3.19]{LM95}.}

    (2)  A variant of the right Krieger  cover construction can be made for  
  path sets.
  Given a path set $\sP$, this   construction considers the infinite follower sets 
 $\sP(\bw)$  consisting of all paths that can follow 
 a given  finite {\em initial} word $\bw := \beta_1 \beta_2 \cdots \beta_k$ of some path in  $\sP$.
viewing it as corresponding to $ \alpha_{-k}\cdots  \alpha_{-3} \alpha_{-2}\alpha_{-1}$.
Here the empty word $\bw= \emptyset$ is allowed.
 As $\bw$ varies over all finite words,  only a finite number of distinct sets $\sP(\bw)$ are obtained this way,
 by virtue of the Structure Theorem \ref{chartheorem}.
 We then create a graph $(\sH, v_0)$
 whose vertices correspond to all the distinct $\sP(\bw)$,
 with the marked vertex $v_0$ corresponding to $\sP$,
 and with edges added in the obvious fashion (a directed edge labelled $j$
 is added from $\sP(\bw)$ to $\sP(\bw,j)$, checking this is well-defined).
    This variant construction  gives  a right-resolving presentation of the path set $\sP$.
   However we do not know   if it is always true in the one-sided case that when $\sP$ is irreducible, this
   construction gives a minimal path set realization.     The proof of minimality 
  in the two-sided case   does not  extend 
  because in  the one-sided case  the resulting graph $(\sH, v_0)$ may be reducible.
  Example \ref{ex33}  gives   examples  in which $\sH$ is reducible, but we
  are able to prove 
  minimality of  path set presentations in these cases
  by the specific argument in Appendix A.

%

\section{Decimation of  Path Sets}\label{sec5a}

\begin{proof}[Proof of Theorem \ref{fracthm}.]
(1) It suffices to prove the result when  $j=0$. 
For we have 
$$
\sP_{j, m} = \psi_{j, m} (\sP) = \psi_{0, m}(\sigma^j(\sP)).
$$
Now $\sP' := \sigma^j (\sP)$ is a path set by Theorem \ref{softhm}(1),
so to establish that  $\sP_{j, m}$ is a path set, it suffices to show that $\psi_{0,m}(\sP')$ is a path set.

For the case $j=0$,  we suppose given   a right-resolving presentation $(\sG, v)$ of $\sP$.
We derive from it  a presentation $(\sG', v)$ of $\sP_{0, m}$, which is generally not right-resolving,
as follows. We make an initial labelled graph $\sG_1$ which contains vertex $v$ and
all nodes reachable in one step from $v$, which in the case of self-loops from $v$ is
altered to add  a new copy of 
node $v$, labelled $v^{(1)}$, to which all self-loops of $v$ are required to exit. 
Secondly, from each new node $w$ reached after one step, one creates a (large) set of 
new vertices, corresponding to each $m$-step directed path leaving from that node $w$,
with vector of labels $\ell^{'} :=(\ell_1, \ell_2, ..., \ell_m)$, say. 
If $w'$ is the identifier of the vertex reached in $\sG$
along this path, then the  new vertex created is  identified by the symbol  $(\ell^{'}, w')$ 
There is now assigned a single directed edge from $w$ to $w'$ with label $\ell_m$.
Finally we add edges between $(\ell^{'}, w')$ and $(\ell^{''}, w'')$ whenever there is an
$m$-step path in $\sG$ from vertex $w'$ to $w''$ described by the vector $\ell^{''}$, and assigned
label $\ell_m^{''}$ the last step in that path. 
This describes the full graph $\sG'$, and the initial vertex remains $v$, as before.

It is straightforward to check that the paths from $v$ in $\sG''$ produce all paths in $\sP_{0,m}$,
and that each such path occurs at least once.

(2) Suppose that $\sP$ is invariant under the one-sided shift. Now we have
$$\sP_{j,m} = \psi_{0,m}(\sigma^j (\sP)) = \psi_{0, m}(\sP) = \sP_{0, m},$$
so all are equal, to a set denoted $\sP_m$, say. Finally $\sigma (\sP_{j, m} )= \sP_{j+m, m} = \sP_{j, m},$
showing that $\sP_{j,m}$  is shift-invariant.
\end{proof}

%
%

\section{Structural Characterization of Path Sets}\label{sec6}

\begin{proof}[Proof of Theorem \ref{chartheorem}]
(1) $\implies$ (2).  Let a path set $\sP \in \sC(\sA)$ be given.
We construct a finite set $\mathcal{S}_0$ closed under the two
operations of (2), as follows.
Take a right-resolving presentation $\mathcal{G}$ of the path-set $\sP$,
as given by Theorem \ref{rrthm}.
Let $V(G) = \{ v_k\} $ denote the 
set of vertices of $\mathcal{G}$ and let $\sP_k := X_{\mathcal{G}}(v_k)$ denote  the corresponding
path set, with $v_0$ assigned the  path set $\sP_0=\sP$. 
Now the collection 
$$
\mathcal{S}_0 := \{ \bigcup_{k \in J} \sP_k, \,\Big(\bigcup_{k \in J} \sP_k\Big)  
\cap \sZ_j: \mbox{all} \, J \subset V(\mathcal{G}) , \mbox{all}\, j \in \mathcal{A}\}
$$
 is a finite collection
of closed subsets of  $\mathcal{A}^\mathbb{N}$  that contains
$\sP$. It is manifestly closed under intersection with the prefix sets $\sZ_j$, since the
 sets $\sZ_j$'s are pairwise disjoint. Furthermore since $\sigma(X_k)$ is a finite
union of the other $\sP_{k'}$, and since each 
  $\sigma(\sP_k \cap \sZ_j)$ is a single $\sP_{k'}$ or the empty set (using the right-resolving property),
  the set $\mathcal{S}_0$ is also closed under the forward shift operation.
Thus (2) follows.

(2) $\implies$ (1).  Let $\mathcal{S}$ be
the smallest collection of sets containing $\sP$ and  closed under intersections with all the
prefix sets $\sZ_j$
and under the one-sided shift.  By hypothesis there exists  a finite set $\mathcal{S}_0$ containing $\sP$
and closed under these operations, and it  contains $\mathcal{S}$, therefore $\sS$ is  finite.
We construct a right-resolving presentation for $\sP$ using $\sS$
as follows. 
Form a graph whose nodes
are labelled with all sets is $\sS$. 
Let $\sP_{0,j_1} = \sP \cap \sZ_{j_1}$, and let $\sP_{1, j}= \sigma( \sP_{0, j})$
and put an edge labelled $j$ from $\sP$ to the node labelled $X_{1, j}$. Form now
$\sP_{1, j_1, j_2} = \sP_{1, j_1} \cap \sZ_{j_2}$, and make an edge labelled$j_2$
to a node labelled by the set $\sP_{1, j_1, j_2}$, and so on. A finite graph results,
which is right-resolving by construction. It contains $\sP$ as a label of   one node.
Now one may check that the definitions imply that the set of labelled paths through
this graph, starting at $\sP$, correspond exactly to the members of $\sP$. This
certifies that $\sP$ is  a path set. 
\end{proof}

%
%

\section{Topological Entropy}\label{sec5}


\begin{proof}[Proof of Theorem~\ref{entthm}]
 Let $\mathcal{P}$ be a path set with right-resolving presentation 
 $\mathcal{G}(v)$, so that each vertex of $\mathcal{G}$ is reachable from $v$. 
 Thus $\overline{\mathcal{P}}$ is a one-sided sofic shift with presentation 
 $\mathcal{G}$. 
  The equality $H_{top}(\mathcal{P}) = H_{top}(\overline{\mathcal{P}})$
  follows directly from  the definitions, because $\mathcal{P}$ and $\overline{\mathcal{P}}$
  have the same set of internal blocks, so $N_n(\mathcal{P}) = N_n(\overline{\mathcal{P}})$.

  It remains to 
   show $H_p(\mathcal{P}) = H_{top}(\overline{\mathcal{P}})$.
    For this it  suffices 
   to show that $N^I_n(\mathcal{P})$ is asymptotic to $N_n(\overline{\mathcal{P}})$. 
Clearly, $N^I_n(\mathcal{P}) \leq N_n(\overline{\mathcal{P}})$. Now we will 
show that there is a fixed $k$ such that $N^I_{n+k}(\mathcal{P}) \geq N_n(\overline{\mathcal{P}})$
 for all $n$. $\mathcal{G}$ is by definition a finite graph, and by assumption each vertex 
 of $\mathcal{G}$ is reachable from $v$. Thus, there is a finite integer $k$ such that 
 each vertex of $\mathcal{G}$ is reachable by a path originating at $v$ of length no longer than $k$.
  For a vertex $w$ of $\mathcal{G}$ let $P_\mathcal{G}(w,n)$ denote the set of all paths of length 
  $n$ in $\mathcal{G}$ originating at $w$. Then
\begin{equation} \label{Eqent1} 
N^I_n(\mathcal{P}) = | P_\mathcal{G}(v,n)|
\end{equation}
and 
\begin{equation} \label{Eqent2} 
N_n(\overline{\mathcal{P}}) = \Big| \bigcup_{w \text{ a vertex of } \mathcal{G}} P_\mathcal{G}(w,n)\Big|.
\end{equation}

We construct an injective maps $f_w:P_\mathcal{G}(w,n) \rightarrow  P_\mathcal{G}(v,n+k_w)$ for each vertex 
$w$ of $\mathcal{G}$, where $k_w$ is an integer less than or equal to $k$, such that the images of the maps $f_w$ are non-intersecting. 

For a vertex $w \in \mathcal{G}$, choose some path $e_1e_2\ldots e_{r_w}$ from $v$ to $w$ for some $r \leq k$. Then for a path $e_1'e_2'\ldots e_n'$ of length $n$ originating $w$, let 
\begin{equation} f_w (e_1' e_2' \ldots e_n') = e_1 e_2 \ldots e_{r_w} e_1' e_2' \ldots e_n' \in P_\mathcal{G}(v,n+r) 
\end{equation}
be obtained by concatenating on the left by our fixed path $e_1e_2 \ldots e_{r_w}$. The maps $f_w$ are clearly injective, and since the $(n-1)$st to last vertex of each path in the image of $f_w$ must be $w$, there images are nonintersecting. 

Composing the pasting $\cup{f_w}$ with injections $P_\mathcal{G}(v,n+r_w) \rightarrow P_\mathcal{G}(v,n+k)$
 (which we can do since $\mathcal{G}$ has no stranded states) gives an injection 
 $\cup \mathcal{P}_\mathcal{G}(w,n) \rightarrow P_\mathcal{G}(v,n+k)$, which by 
 (\ref{Eqent1}) and (\ref{Eqent2}) gives 
\begin{equation}
 N_n(\overline{\mathcal{P}}) \leq N^I_{n+k}(\mathcal{P})
\end{equation}
for all $n$. This completes the proof of Theorem ~\ref{entthm}. 
\end{proof}

Theorem \ref{Percor} will follow for general path sets using  a standard formula for the entropy of a sofic shift.
The Perron-Frobenius Theorem says, among other things, that if 
$A$ is a nonnegative, irreducible square matrix then there is a unique real eigenvalue 
$\lambda > 0$ such that $\lambda \geq | \mu |$ for any other eigenvalue $\mu$ of $A$. 
This $\lambda$ is called the \emph{Perron eigenvalue} of $A$ (Lind \cite{Lin84}). 

\begin{prop} \label{Perprop}
 Let $X_\mathcal{G}$ be a one-sided sofic system which is right-resolving, 
 and let $A$ be the adjacency matrix of 
 the underlying (unlabelled) directed graph $G$ of $\mathcal{G}$. Then the topological entropy 
\begin{equation}
 h(X_\mathcal{G}) = \log \lambda, 
\end{equation}
where $\lambda$ is the spectral radius of $A$. If $\mathcal{G}$, and hence $A$, is irreducible,
 then $\lambda$ is the Perron eigenvalue of $A$.
\end{prop}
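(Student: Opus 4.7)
The plan is to bound $N_n(X_\mathcal{G})$ above and below by quantities that are easily controlled in terms of the matrix powers $A^n$, and then invoke Gelfand's spectral radius formula. Because $\mathcal{G}$ is right-resolving, once a starting vertex $v \in V(G)$ is fixed, a label sequence of length $n$ emanating from $v$ determines at most one walk in $\mathcal{G}$ of length $n$ issuing from $v$; conversely every such walk gives a unique label sequence. Consequently the set of length-$n$ label sequences produced by walks starting at $v$ is in bijection with the set of length-$n$ walks in the underlying directed graph $G$ starting at $v$, whose cardinality is $\sum_{w \in V(G)} (A^n)_{vw} = (A^n \mathbf{1})_v$.

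Since, by Definition \ref{de13}, the one-sided sofic shift $X_\mathcal{G}$ consists of the labels of all infinite walks in $\mathcal{G}$ starting at any vertex, a block of length $n$ appears in some sequence of $X_\mathcal{G}$ if and only if it is the label of some length-$n$ walk originating at some vertex of $\mathcal{G}$ (using that $\mathcal{G}$ is pruned, or rather that every finite path extends to an infinite one, which one may assume without loss of generality by iteratively removing stranded states). The right-resolving property then yields the two-sided bound
\begin{equation}
\max_{v \in V(G)} (A^n \mathbf{1})_v \;\le\; N_n(X_\mathcal{G}) \;\le\; \mathbf{1}^T A^n \mathbf{1}.
\end{equation}
Both bounds are norms of $A^n$ (up to constants depending on $|V(G)|$ but not on $n$), so by Gelfand's formula each of them grows like $\lambda^{n(1+o(1))}$, where $\lambda$ is the spectral radius of $A$. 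Taking $\frac{1}{n}\log$ of the sandwich and passing to the limit gives $h(X_\mathcal{G}) = \log \lambda$.

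For the final statement, when $\mathcal{G}$ is irreducible the unlabeled graph $G$ is strongly connected, hence its adjacency matrix $A$ is an irreducible nonnegative integer matrix. The Perron--Frobenius theorem then identifies $\lambda$ with the unique positive real eigenvalue of maximum modulus, i.e.\ the Perron eigenvalue of $A$. No real obstacle arises: the argument is classical (compare Lind--Marcus \cite[Ch.~4]{LM95} for the two-sided case), and the only point requiring care is ensuring that the presentation has no stranded states so that the count of finite walks in $G$ faithfully reflects the count of initial blocks in $X_\mathcal{G}$ — this is harmless since pruning does not change $X_\mathcal{G}$ and preserves both right-resolvingness and the spectral radius of $A$ (up to the zero eigenvalues contributed by removed vertices).
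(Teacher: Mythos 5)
Your proof is correct and is essentially the argument the paper has in mind: the paper simply cites Kitchens' adjacency-matrix walk-counting argument for one-sided shifts of finite type and remarks that right-resolvingness lets label sequences from a fixed vertex be identified with walks, which is exactly the bijection you use to sandwich $N_n(X_\mathcal{G})$ between $\max_v (A^n\mathbf{1})_v$ and $\mathbf{1}^T A^n \mathbf{1}$. Your write-up is just a self-contained version (via Gelfand's formula, with the stranded-state/pruning point handled explicitly), so no further comment is needed.
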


\begin{proof} This is proved for one-sided shifts of finite type in 
Kitchens (\cite{Kit98}, Observation 1.4.2, p. 23).
For right-resolving one-sided sofic shifts the proof follows by
the same argument. The right-resolving property guarantees that
all paths can be told apart by their symbolic dynamics starting from the
initial state $v_0$., i.e. the path symbols uniquely determine the sequence of vertex
labels along the path.
\end{proof}

\begin{proof}[Proof of Theorem \ref{Percor}.] 
The assumption that the path set $\sP$ has a presentation $(\sG, v)$
which is reachable means that the topological entropy of $(\sG, v)$ as a path set
equals the topological entropy of $\sG$ as a one sided sofic-shift.
Now combining Theorem ~\ref{entthm} and Proposition ~\ref{Perprop} yields
 the result. 
  \end{proof}

%

\section*{Appendix A.: Minimality of  Path Set Presentations in Example \ref{ex33} }\label{sec10}

The construction in Example \ref{ex33} produced a right-resolving presentation $(\sH, v_0)$ as a path set of
the one-sided sofic shift $Y$ given by the right-resolving graph $\sG:=\sG_n$.
Recall that $|V(\sG)|=n$ while   $|V(\sH)| = 2^n -1$.
Here we show that $(\sH, v_0)$ is a minimal presentation as a path set,
even though the underlying graph is not strongly connected.
That is, given  any other right-resolving presentation  $(\sH', v_1)$  of $Y$ as  a path set, 
our object is to show that $|V(\sH')| \ge 2^n -1$. We present an ad hoc argument specific to
this example.

The label set of $\sG_n$ is $\sL =\{ 0, 1, ..., n-1, n, ..., 2n\}$.
We consider those paths emanating from the pointed vertex $v_0$ 
that have successive labels $\sigma:= ( n+ i_1, ..., n+i_{n-1})$
in which $(i_1, ..., i_{n-1})$ is a subset of $n-1$ distinct values of $(0, 1, ..., n-1)$; there are $n!$ such
paths, and they are all realized in the one-sided sofic shift $Y$. 
We term {\em vertices reached at level $k$} the collection of all vertices reached after exactly  $k$ steps along any
of these paths, for $1 \le k \le n-1$.  We observe that 
such paths never extend to the value $n+i_n$, where $i_n$ is the unique omitted value in
$(0,1,..., n-1)$, but they do extend one step further, allowing $i_n$ to equal any one of $i_1, i_2, ..., i_{n-1}$.
It follows that these  paths must reach at least $n$ distinct vertices at step $n-1$, which are distinguished from each other
by the fact that they each omit an exit edge having value $i_n$ and do not omit an exit edge with any
other value $n+i_j$, $1 \le i \le n-1$. 
Thus we have shown that least $n= \binom{n}{n-1}$ distinct vertices are reached at the final level $n-1$.
We also note that all paths of length $n$ starting from $v$ having labels $(n+i_1, ..., n+i_n)$ with each $1 \le i_j \le n$
and with at least two repeated values are legal paths  occurring in the graph and are
unique paths by the right-resolving property.

We now study  the vertices reached at step $j \ge 0 $ for all these paths, for $0 \le j \le n-1$, and show
by induction on $j$  that at the $j$-th step 
there are at least $\binom{n}{j}$ such vertices  not reached at earlier steps.
This result implies 
$$
|V(\sH')|  \ge \,\sum_{j=0}^{n-1} \binom{n}{j} = 2^n -1,
$$
which proves minimality.
The base case $j=0$ of the induction holds, since  at  step $0$ there is the initial vertex $v_0$.
For the induction step, assume it is true at stage $j-1$,
and consider all the vertices reached at stage $j$. Note that for $j \le n-2$ all these vertices have exit edges for all
$n$ possible labels $n, ..., 2n-1$, a property which distinguishes these vertices from final level vertices.
These vertices also cannot coincide with those from any of the  
earlier levels, because if  in those cases there is a shorter path reaching these vertices $v_1$, which then
extends to a path at the final level vertex using $n-2$ or fewer letters. But such a path is
automatically extendible one more step with an arbitrary choice of label $n+i_n$, $0 \le i \le n-1$,
which contradicts the path having reached a vertex at which extension by one of these letters is forbidden.
This extendability property holds because the right-resolving property requires all such other paths be unique from
the start vertex $v_0$.
It follows that there must be at least $\binom{n}{j}$ distinct such vertices $w$, corresponding to each possible  of $j$ distinct
(unordered)  labels
$i_1, ..., i_j$ 
of $(0, 1, ..., n-1)$. For if two such vertices coincided, we could extend a path to a final level vertex, and
at least one of the two extensions would contain a repeated letter, showing that the final level vertex
has an exit edge labelled $n+i$ for $0 \le i \le n-1$, a contradiction. This completes the 
induction step, and the proof.

%

\section*{Appendix B: Characterizations of one-sided sofic shifts}\label{sec11}

This appendix establishes
 the equivalence of our definition \ref{de13} of one-sided sofic shift, to that
used in Ashley, Kitchens and Stafford \cite{AKS92}.  \\

\noindent {\bf Theorem B-1.}
{\em The following are equivalent, for a 
one-sided shift invariant set $S \subset \sA^{\NN}$
\begin{enumerate}
\item
$S$ is a one-sided sofic shift, i.e. it is the set of labels
of all one-sided infinite walks along a finite labelled directed
graph $\sG$, starting from any vertex $\sG$.
\item
$S$ is closed one-sided subshift that has finitely many different infinite follower sets.
\item
$S$ is a closed one-sided subshift that has finitely many different finite follower sets.
\end{enumerate}
}

\begin{proof}
Here condition (3) is the definition used in \cite{AKS92}.
For a closed one-sided subshift, given a finite word $W$, we  let $F(W)$ denote the finite
follower set determined by $W$, consisting of all finite words that directly follow an occurrence of $W$.
We let $\overline{F}(W)$ denote the infinite follower set, consisting of all one-sided infinite words
that follow an occurrence of $W$.

$(1) \Rightarrow (3)$. 
By hypothesis the one-sided sofic shift $S$ has a  labelled graph presentation $\sG$,
and this may be regarded as the presentation of a two-sided sofic shift
as well.
Finite follower sets with prefix $W$ are the set of all allowable finite paths 
that follow a path with the given (finite) prefix,  and they are the same
for the one-sided shift as for  the two-sided shift.
The result follows since
the follower set  property (3) is established for two-sided shifts
 (\cite[Theorem 3.2.10]{LM95}).

$(3) \Rightarrow (2)$ 
By hypothesis $S$ is closed and shift-invariant.
Then each finite follower set $\sF(W)$ 
associated to a finite word $W$  determines a
unique infinite follower set $\overline{F}(W)$, which is the set of one-sided infinite words that
are limits of words in the finite follower set. Indeed any infinite string in $\overline{F}(W)$
has all its finite prefixes in $F(W)$ and is therefore in the limit set;  conversely
any limit word of $F(W)$ will be in the infinite follower set,  since $S$ is closed and shift-invariant.
Because the shift $S$ is one-sided, each infinite follower set in it is determined 
by some finite prefix $W$. But there are only finitely different such sets $F(W)$ by (2).

$(2) \Rightarrow (1).$
We construct a directed labelled graph $\sG$ whose vertices correspond to
the distinct infinite follower sets. Given such a set $\overline{F}(W)$, we can add
one letter $a \in \sA$ to it on the right $W' :=W a$ and then draw an edge in the graph $\sG$,
labelled $a$ to the vertex corresponding to the follower set $\overline{F}(W')$.
One now checks that the graph $\sG$ generates the one-sided shift $S$. 
\end{proof}

Ashley, Kitchens and Stafford  \cite[Lemma 2.8]{AKS92} characterize one-sided sofic shifts $S$
as those closed shift-invariant sets that avoid an infinite set of forbidden blocks describable as
the complete set of finite paths of  a certain finite automaton between a given
marked  initial node $I$ and a given marked  terminal node $T$. 

%
%
%

\end{document}